\numberwithin{equation}{section}
\newcommand{\nn}{\nonumber}
\newcommand{\eps}{\varepsilon}
\newcommand{\bx}{{\bf x}}
\newtheorem{theorem}{Theorem}
\newtheorem{lemma}{Lemma}
\newtheorem{remark}{Remark}
\author[1]{Yue Feng}
\affil[1]{Laboratoire Jacques-Louis Lions, Sorbonne Universit\'e, Paris 75005, France }
\author[2]{Yichen Guo}
\affil[2]{Department of Mathematics, National University of Singapore, Singapore 119076, Singapore}
\author[3]{Yongjun Yuan}
\affil[3]{MOE-LCSM, School of Mathematics and Statistics, Hunan Normal University, Changsha, Hunan 410081, China}
\date{}
\begin{document}

\title{Uniform error bound of an exponential wave integrator for the long-time dynamics of  the nonlinear Schr\"odinger equation with wave operator}

\maketitle
%%%%% Begin Abstract %%%%%%%%%%%
\begin{abstract}
We establish the uniform error bound of an exponential wave integrator Fourier pseudospectral (EWI-FP) method for the long-time dynamics of the nonlinear Schr\"odinger equation with wave operator (NLSW), in which the strength of the nonlinearity is characterized by $\eps^{2p}$ with $\eps \in (0, 1]$ a dimensionless parameter and $p \in \mathbb{N}^+$. When $0 < \eps \ll 1$, the long-time dynamics of the problem is equivalent to that of the NLSW with $O(1)$-nonlinearity and $O(\eps)$-initial data. The NLSW is numerically solved by the EWI-FP method which combines an exponential wave integrator for temporal discretization with the Fourier pseudospectral method in space. We rigorously establish the uniform $H^1$-error bound of the EWI-FP method at $O(h^{m-1}+\eps^{2p-\beta}\tau^2)$ up to the time at $O(1/\eps^{\beta})$ with $0 \leq \beta \leq 2p$, the mesh size $h$, time step $\tau$ and $m \geq 2$ an integer depending on the regularity of the exact solution. Finally, numerical results are provided to confirm our error estimates of the EWI-FP method and show that the convergence rate is sharp.
\end{abstract}
%%%%% end %%%%%%%%%%%

%%%%%% Keywords %%%%%%%
{\bf Keywords:}  Nonlinear Schr\"odinger equation with wave operator, long-time dynamics, exponential wave integrator, Fourier pseudospectral method, uniform error bound

%%%%%% maketitle %%%%%%
\maketitle

%%%%%% Introduction %%%%%%

\section{Introduction}
In this paper, we consider the following nonlinear Schr\"odinger equation with wave operator (NLSW) on the torus $\mathbb{T}^d$ ($d = 1, 2, 3$) 
\begin{equation}
    \label{eq:NLSW_wl_ch4}
    \begin{cases}
        i \partial_t \psi- \alpha \partial_{tt}\psi  + \nabla^2 \psi - \varepsilon^{2p} |\psi|^{2p} \psi= 0, \quad \bx \in \mathbb{T}^d, \quad t > 0, \\
        \psi(\bx,0) = \psi_0(\bx), \quad \partial_t \psi(\bx,0) = \psi_1(\bx), \quad \bx\in \mathbb{T}^d,
    \end{cases}
\end{equation}
where $\psi:=\psi(\bx,t)$ is a complex-valued wave function with the spatial variable $\bx$ and time $t$, $\alpha = O(1)$ is a positive constant and $0<\varepsilon \le 1$ is a dimensionless parameter controlling the strength of the nonlinearity, $p \in \mathbb{N}^+$ and $\nabla^2 = \Delta$ is the $d$-dimensional Laplace operator. In addtiton, $\psi_0(\bx) = O(1)$ and $\psi_1(\bx) = O(1)$ are two given complex-valued functions representing the initial wave and velocity, respectively. The solution of the NLSW with weak nonlinearity \eqref{eq:NLSW_wl_ch4} propagates waves in both space and time with wavelength at $O(1)$ and the wave speed in space is also at $O(1)$. It is well known that the NLSW \eqref{eq:NLSW_wl_ch4} conserves the {\emph{mass}} \cite{BC1,BC2}
\begin{equation*}
N(t) := \int_{\mathbb{T}^d} |\psi(\bx, t)|^2 d\bx - 2\alpha \int_{\mathbb{T}^d} {\rm{Im}}\left(\overline{\psi(\bx, t)}\partial_t \psi(\bx, t) \right) d\bx \equiv N(0), \quad t \ge 0,
\end{equation*}
and the {\emph{energy}}
\begin{align*}
E(t)  := &\ \int_{\mathbb{T}^d}\left[ \alpha|\partial_t\psi(\bx, t)|^2  + |\nabla\psi(\bx, t)|^2 +\frac{\eps^{2p}}{p+1} |\psi(\bx, t)|^{2p+2} \right]d\bx \nn\\
 \equiv & \ E(0), \quad  t \ge 0,
\end{align*}
where $\overline{c}$ and $\rm{Im}(c)$ denote the conjugate and imaginary part of $c$, respectively.

The nonlinear Schr\"odinger equation with wave operator (NLSW) arises from different physical fields including the nonrelativistic limit of the Klein--Gordon equation \cite{MNO,SAY,TSU}, the Langmuir wave envelope approximation in plasma \cite{BCT,CF}, and the modulated planar pulse approximation of the sine-Gordon equation for light bullets \cite{BDX,XIN}. In the past decades, the NLSW \eqref{eq:NLSW_wl_ch4} with $\eps=1$ and $0 < \alpha \ll 1$ has been widely studied analytically and numerically \cite{BC1,BC2,BCT,MNO,SAY}. Along the analytical front, the existence of the solution and the convergence rate to the nonlinear Schr\"odinger equation (NLSE) have been investigated \cite{BCT,MNO,SAY,TSU}. In the numerical aspect, different efficient numerical methods have been proposed and the conservative finite difference methods are most popular \cite{BC1,CHP,DL,GL,WZ,ZC}. In particular, the exponential wave integrator sine pseudospectral (EWI-SP) method has been proposed with optimal uniform error bounds in time established rigorously \cite{BC2}. For more details related to the numerical schemes, we refer to \cite{BZL,GX,HL,JZ,WangJ,WZF,ZHAO} and references therein.

In addition, rescaling the amplitude of the wave function $\psi(\bx,t)$ by introducing a new variable $\phi:=\phi(\bx,t) = \varepsilon \psi(\bx,t)$, the NLSW \eqref{eq:NLSW_wl_ch4} can be reformulated as the following NLSW with $O(1)$-nonlinearity and $O(\eps)$-initial data
\begin{equation}
    \label{eq:NLSW_wi_ch4}
    \begin{cases}
        i \partial_t \phi - \alpha \partial_{tt} \phi + \nabla^2 \phi - |\phi|^{2p} \phi = 0,
        \quad \bx \in \mathbb{T}^d, \quad t>0, \\
        \phi(\bx,0) = \varepsilon \psi_0(\bx), \quad \partial_t \phi(\bx,0) =\varepsilon \psi_1(\bx), \quad \bx\in \mathbb{T}^d.
    \end{cases}
\end{equation}
The long-time dynamics of the NLSW with $O(\eps^{2p})$-nonlinearity and $O(1)$-initial data, i.e., the NLSW \eqref{eq:NLSW_wl_ch4}, is equivalent to that of the NLSW with $O(1)$-nonlinearity and $O(\eps)$-initial data, i.e., the NLSW \eqref{eq:NLSW_wi_ch4}.

In recent years, long-time dynamics of dispersive partial differential equations (PDEs) including the (nonlinear) Schr\"odinger equation, nonlinear Klein--Gordon equation and Dirac equation with weak nonlinearity or small potential are thoroughly studied in the literature \cite{BCF1,BCF2,BFY,FXY,FY,FYJ}. Exponential wave integrators and time-splitting methods are widely used to solve various semilinear evolution equations and  perform well in the long-time simulations \cite{CCO,FAOU,FY,Gau,HLW,HO}. However, to the best of our knowledge, there is no numerical analysis on the error bounds of numerical schemes for the long-time dynamics of the NLSW \eqref{eq:NLSW_wl_ch4} in the literature, especially how the error bound explicitly depends on the mesh size $h$, time step $\tau$ and the small parameter $\eps \in (0, 1]$. Formally, by the energy method and Gronwall inequality, the temporal error bound in the finite time for $t \in [0, T]$ behaves like $O(e^{CT} \tau^2)$ for the second-order temporal discretization, which means that the unbounded temporal error bound is $O(e^{CT/\eps^{\beta}} \tau^2)$ for $t \in [0, T/\eps^{\beta}]$ with $0 \leq \beta \leq 2p$. In order to carry out valid error estimates in the long-time regime, we begin with the proper setup, i.e., the NLSW with weak nonlinearity or small initial data. The aim of this paper is to establish the uniform error bound of the exponential wave integrator Fourier pseudospectral (EWI-FP) method for the long-time dynamics of the NLSW \eqref{eq:NLSW_wl_ch4} up to the time at $O(\eps^{-\beta})$ with $0 \leq \beta \leq 2p$.

The rest of this paper is organized as follows. In section 2, we discuss the derivation of the exponential wave integrator Fourier pseudospectral (EWI-FP) method for the NLSW \eqref{eq:NLSW_wl_ch4} which combines an exponential wave integrator in time with the  Fourier pseudospectral method for spatial discretization. In section 3, we establish the uniform error bound of the EWI-FP method for the long-time dynamics of the NKGE \eqref{eq:NLSW_wl_ch4} up to the time at $O(\eps^{-\beta})$ with $0 \leq  \beta  \leq  2p$. Numerical results are reported in section 4 to confirm our error estimates. Finally, some conclusions are drawn in section 5. Throughout this paper, we adopt the notation $A \lesssim B$ to represent that there exists a generic constant $C > 0$, which is independent of $h$, $\tau$, and $\eps$ such that $|A| \leq  CB$.

\section{An exponential wave integrator Fourier pseudospectral method} 
In this section, we present the exponential wave integrator Fourier pseudospectral (EWI-FP) method to numerically solve the NLSW \eqref{eq:NLSW_wl_ch4}. For simplicity of notations, we only show the numerical scheme for the NLSW \eqref{eq:NLSW_wl_ch4} in one dimension (1D) with $p=1$. It is straightforward to extend it to higher dimensions and/or larger $p$. In 1D, the NLSW \eqref{eq:NLSW_wl_ch4} with $p=1$ on the computational domain $\Omega = (a,b)$ collapses to 
\begin{equation}
    \label{eq:NLSW_wl_1D_ch4}
\left\{
\begin{aligned}
&i \partial_t \psi(x,t) - \alpha \partial_{tt} \psi(x,t) + \partial_{xx} \psi(x,t) - \varepsilon^{2} |\psi(x,t)|^{2} \psi(x,t) = 0, \ x \in \Omega, \ t > 0, \\
 &\psi(x,0) = \psi_0(x), \quad \partial_t \psi(x,0) = \psi_1(x), \quad x\in \overline{\Omega}=[a,b],\\
 &\psi(a,t) = \psi(b,t),\quad \partial_x \psi(a,t) = \partial_x \psi(b,t), \quad t\ge 0.
\end{aligned}\right.
\end{equation}

For an integer $m\ge 0$, we denote by $H^m(\Omega)$ the space of functions $u(x)\in L^2(\Omega)$ with finite $H^m$-norm given by
\begin{equation}
\label{sn}
\|u\|_{H^m}^2=\sum\limits_{l \in \mathbb{Z}} (1+\mu_l^2)^m|\widehat{u}_l|^2,\ \mathrm{for}\ u(x)=\sum\limits_{l\in \mathbb{Z}} \widehat{u}_l e^{i\mu_l(x-a)},\ \mu_l=\frac{2\pi l}{b - a},
\end{equation}
where $\widehat{u}_l \ (l\in \mathbb{Z})$ are the Fourier  coefficients  of the function $u(x)$ \cite{BC2,BFS}. In fact, the  space $H^m(\Omega)$ is the subspace of  classical Sobolev space $W^{m,2}(\Omega)$, which consists of functions with derivatives of order up to $m-1$ being $(b - a)$-periodic \cite{STL}. Since we consider periodic boundary conditions, the above space $H^m(\Omega)$ is suitable. 

Let $\tau = \Delta t > 0$ be the time step size and $t_n = n\tau$ ($n=0,1,\ldots$) as the time steps. Choose the mesh size $h = \Delta x := (b-a)/M$ with $M$ being an even positive integer, then the grid points are denoted as
\begin{equation*}
    x_j :=a+ jh, \quad j \in \mathcal{T}_M^0 =  \{j~|~ j = 0, 1, \ldots, M\}.
\end{equation*}
Denote the index set $\mathcal{T}_M=\left\{l~|~l=-\frac{M}{2},-\frac{M}{2}+1, \ldots, \frac{M}{2}-1\right\}$, and  $C_{\rm per}(\Omega)=\{u\in C(\overline\Omega)~|~u(a) = u(b)\}$ and the spaces
\begin{align*}
& X_M := \left\{u = (u_0, u_1, \ldots, u_M)^T \in \mathbb{C}^{M+1} ~|~u_0 = u_M\right\},\\
& Y_M := \text{span}\left\{e^{i\mu_l (x-a)},\ x\in\overline{\Omega},\ l\in \mathcal{T}_M\right\}.
\end{align*}
For any $u(x) \in C_{\rm per}(\Omega)$ and a vector $u \in X_M$, let $P_M: L^2(\Omega) \to Y_M$ be the standard $L^2$-projection operator onto $Y_M$  and $I_M : C_{\rm per}(\Omega) \to Y_M$ or $I_M : X_M \to Y_M$ be the trigonometric interpolation operator \cite{STL}, i.e.,
\begin{equation*}
P_M u(x) = \sum_{l \in \mathcal{T}_M} \widehat{u}_l e^{i\mu_l(x-a)},\quad I_M u(x) = \sum_{l \in \mathcal{T}_M} \widetilde{u}_l e^{i\mu_l(x-a)},\quad x \in \overline{\Omega},
\end{equation*}
where
\begin{equation*}
\widehat{u}_l = \frac{1}{b - a}\int^{b}_{a} u(x) e^{-i\mu_l (x-a)} dx, \quad \widetilde{u}_l = \frac{1}{M}\sum_{j=0}^{M-1} u_j e^{-i\mu_l (x_j-a)}, \quad l \in \mathcal{T}_M,
\end{equation*}
with $u_j$ interpreted as $u(x_j)$ when involved.

The Fourier spectral discretization for the NLSW \eqref{eq:NLSW_wl_1D_ch4} becomes to find
\begin{equation}
    \label{eq:psi_M_ch4}
    \psi_M(x,t) = \sum_{l \in \mathcal{T}_M} \widehat{\psi}_l(t)  e^{i\mu_l (x-a)}, \quad x\in \Omega, \ t \ge 0,
\end{equation}
such that
\begin{equation}
    \label{eq:NLSW_M_ch4}
   \left( i \partial_t  - \alpha \partial_{tt} \right)\psi_M(x,t) + \partial_{xx} \psi_M(x,t) - \varepsilon^{2} P_M( f(\psi_M(x,t))) = 0, 
\end{equation}
where $f(v)=|v|^2 v$. Plugging \eqref{eq:psi_M_ch4} into \eqref{eq:NLSW_M_ch4}, by the orthogonality of Fourier basis functions, we get
\begin{equation}
    i \frac{d}{dt} \widehat{\psi}_l(t) - \alpha \frac{d^2}{dt^2} \widehat{\psi}_l(t) - |\mu_l|^2 \widehat{\psi}_l(t) - \varepsilon^{2}\widehat{(f(\psi_M))}_l(t) =0, \quad l\in \mathcal{T}_M,\ t > 0.
\end{equation}
For each $l \in \mathcal{T}_M$, when $t$ is near $t_n=n\tau$ ($n \geq 0$), the above ODEs can be rewritten as
\begin{equation}
    \label{eq:NLSW_l_ch4}
    i \frac{d}{ds} \widehat{\psi}_l(t_n+s) - \alpha \frac{d^2}{ds^2} \widehat{\psi}_l(t_n+s) - |\mu_l|^2 \widehat{\psi}_l(t_n+s) - \varepsilon^{2} f_l^n(s) =0, \ s \in \mathbb{R},
\end{equation}
where
\begin{equation}
    f_l^n(s) = \widehat{(f(\psi_M(t_n+s)))}_l.
\end{equation}

Now, we proceed to apply an exponential wave integrator for solving the second-order ODEs \eqref{eq:NLSW_l_ch4}. The variation-of-constant formula or the Duhamel principle shows that for $n \geq 0$,
\begin{align}
    \label{eq:int_form_l_ch4}
        \widehat{\psi}_l(t_n + s)=&\ e^{i \beta^+_l s} \left(-\frac{\beta_l^- \widehat{\psi}_l(t_n) + i \partial_t \widehat{\psi}_l(t_n)}{\beta_l}\right) + e^{i \beta^-_l s}\left(\frac{\beta_l^+ \widehat{\psi}_l(t_n) + i \partial_t \widehat{\psi}_l(t_n)}{\beta_l}\right) \nn \\
        &\ + \frac{i\varepsilon^2}{\alpha \beta_l}\int_0^s \kappa_l(s-w) f_l^n(w) dw, \quad 0 \le s \le \tau,
\end{align}	
where
\begin{equation}
\label{eq:beta_l_ch4}
\begin{split}
&\beta^+_l = \frac{1+\sqrt{1 + 4\alpha |\mu_l|^2}}{2 \alpha}, \\
& \beta^-_l = \frac{1-\sqrt{1 + 4\alpha |\mu_l|^2}}{2 \alpha}= \frac{-2|\mu_l|^2}{1+\sqrt{1 + 4\alpha |\mu_l|^2}} , \\
&\beta_l = \beta^+_l - \beta^-_l = \frac{\sqrt{1 + 4\alpha |\mu_l|^2}}{ \alpha},\\
\end{split}
\end{equation}
and the integral kernel is defined by
\begin{equation}
    \kappa_l(s) = e^{i \beta_l^+ s} - e^{i \beta_l^- s}.
\end{equation}
For $n=0$, taking $s =\tau$ in \eqref{eq:int_form_l_ch4}, the initial condition implies
\begin{align}
    \label{eq:sol_0_ch4}
        \widehat{\psi}_l(\tau) = &\  e^{i \beta^+_l \tau} \left(-\frac{\beta_l^- \widehat{\psi}_l(0) + i \partial_t \widehat{\psi}_l(0)}{\beta_l}\right) + e^{i \beta^-_l \tau}\left(\frac{\beta_l^+ \widehat{\psi}_l(0) + i \partial_t \widehat{\psi}_l(0)}{\beta_l}\right)\nn \\
        &\ + \frac{i\varepsilon^2}{\alpha \beta_l}\int_0^\tau \kappa_l(\tau-w) f_l^0(w) dw\nn \\
        = &\ \frac{\beta_l^+  e^{i \beta^-_l \tau} - \beta_l^-  e^{i \beta^+_l \tau}}{\beta_l}  \widehat{(\psi_0)}_l - i\tau e^{\frac{i\tau}{2\alpha}} \text{sinc} (\tau \beta_l/2)  \widehat{(\psi_1)}_l \nn\\
        &\ + \frac{i\varepsilon^2}{\alpha \beta_l}\int_0^\tau \kappa_l(\tau-w) f_l^0(w) dw, 
\end{align}
with the {\emph{sinc}} function defined as
\begin{equation}
    \text{sinc}(s) = \frac{\sin(s)}{s} \quad \text{for} \quad s\neq 0, \ \text{and}\quad \text{sinc}(0)=1.
\end{equation}
For $n \ge1$, choosing $s = \pm \tau$ in \eqref{eq:int_form_l_ch4} and eliminating the derivative term $\partial_t \widehat{\psi}_l(t_n)$, we have
\begin{align}
    \label{eq:sol_n_ch4}
        \widehat{\psi}_l(t_{n+1})=& -e^{\frac{i\tau}{\alpha}} \widehat{\psi}_l(t_{n-1}) + 2 e^{\frac{i\tau}{2\alpha}} \cos(\beta_l\tau/2) \widehat{\psi}_l(t_{n}) + \frac{i\varepsilon^2}{\alpha \beta_l}\int_0^\tau \kappa_l(\tau-w) f_l^n(w) dw  \nn\\
         & - \frac{i\varepsilon^2e^{\frac{i\tau}{\alpha}}}{\alpha \beta_l}  \int_0^\tau \kappa_l(-w) f_l^{n-1}(w) dw.
\end{align}
For convenience, we introduce the following notations for $l \in \mathcal{T}_M$,
\begin{equation*}
    \sigma_l^+(s) = e^{i\beta_l^+ s/2} \text{sinc}\left(\frac{\beta_l^+ s}{2}\right), \quad \sigma_l^-(s) = e^{i\beta_l^- s/2} \text{sinc}\left(\frac{\beta_l^- s}{2}\right), \quad s\in\mathbb{R}.
\end{equation*}
Then, we are going to approximate the integrals in \eqref{eq:sol_0_ch4} and \eqref{eq:sol_n_ch4}, respectively. For $n=0$, by Taylor expansion, when $l \neq 0$, it leads to
\begin{align}
&\int_0^\tau \kappa_l(\tau-w) f_l^0(w) dw \nn \\
&\approx \int_0^\tau \kappa_l(\tau-w) \left(f_l^0(0) + w \partial_t f_l^0(0)\right) dw \nn \\
&=  \tau f_l^0(0)\left(\sigma_l^+(\tau) - \sigma_l^-(\tau) \right) + i \tau \partial_t f_l^0(0) \left(\frac{1-\sigma_l^+(\tau) }{\beta_l^+}-\frac{1-\sigma_l^-(\tau) }{\beta_l^-}\right),\\
&\int_0^\tau \kappa_l(-w) f_l^0(w) dw \nn   \\
& \approx \tau f_l^0(0) \left(\overline{\sigma_l^+(\tau)} - \overline{\sigma_l^-(\tau)}\right) + i \tau \partial_t f_l^0(0)\left(\frac{1-\sigma_l^+(\tau) }{\beta_l^+ e^{i \beta_l^+ \tau}}-\frac{1-\sigma_l^-(\tau) }{\beta_l^- e^{i \beta_l^- \tau}}\right),
\end{align}
and when $l=0$, as $\beta^-_0 = 0$, we have
\begin{align}
&\int_0^\tau \kappa_0(\tau-w) f_0^0(w) dw  \nn \\
&\approx \int_0^\tau \kappa_0(\tau-w)\left(f_0^0(0) + w \partial_t f_0^0(0)\right) dw \nn \\
&=  \tau f_0^0(0)\left(\sigma_0^+(\tau) - 1\right) + i \tau \partial_t f_0^0(0)\left(\frac{1-\sigma_0^+(\tau) }{\beta_0^+}+\frac{i\tau}{2}\right),\\
&\int_0^\tau \kappa_0(-w) f_0^0(w) dw  \nn  \\
&\approx  \tau f_0^0(0)\left(\overline{\sigma_0^+(\tau)} -1\right) + i \tau \partial_t f_0^0(0) \left(\frac{1-\sigma_0^+(\tau) }{\beta_0^+ e^{i \beta_0^+ \tau}}+\frac{i\tau}{2}\right),
\end{align}
where $\partial_t f_0^0(0)$ can be computed accurately, since $\partial_t \psi(x,0) = \psi_1(x)$ is known in the initial condition.

For $n \ge 1$,  when $l \neq 0$, we apply similar approximations as
\begin{align}
&\int_0^\tau \kappa_l(\tau-w) f_l^n(w) dw  \nn \\
&\approx \int_0^\tau \kappa_l(\tau-w)\left(f_l^n(0) + w \partial_t f_l^n(0)\right) dw \nn \\
&=  \tau f_l^n(0)\left(\sigma_l^+(\tau) - \sigma_l^-(\tau) \right) + i \tau \delta_t^- f_l^n(0) \left(\frac{1-\sigma_l^+(\tau) }{\beta_l^+}-\frac{1-\sigma_l^-(\tau) }{\beta_l^-}\right),\\
&\int_0^\tau \kappa_l(-w) f_l^n(w) dw \nn  \\
&\approx  \tau f_l^n(0)\left(\overline{\sigma_l^+(\tau)} - \overline{\sigma_l^-(\tau)}\right) + i \tau \delta_t^- f_l^n(0) \left(\frac{1-\sigma_l^+(\tau) }{\beta_l^+ e^{i \beta_l^+ \tau}}-\frac{1-\sigma_l^-(\tau)}{\beta_l^- e^{i \beta_l^- \tau}}\right),
\end{align}
and when $l=0$, we have
\begin{align}
&\int_0^\tau \kappa_0(\tau-w) f_0^n(w) dw  \nn \\
&\approx \int_0^\tau \kappa_0(\tau-w) \left(f_0^n(0) + w \partial_t f_0^n(0) \right) dw \nn \\
&=  \tau f_0^n(0) \left(\sigma_0^+(\tau) - 1\right) + i \tau \delta_t^- f_0^n(0) \left(\frac{1-\sigma_0^+(\tau) }{\beta_0^+}+\frac{i\tau}{2}\right),\\
&\int_0^\tau \kappa_0(-w) f_0^n(w) dw  \nn  \\
&\approx \tau f_0^n(0) \left(\overline{\sigma_0^+(\tau)} -1 \right) + i \tau \delta_t^- f_0^n(0) \left(\frac{1-\sigma_0^+(\tau) }{\beta_0^+ e^{i \beta_0^+ \tau}}+\frac{i\tau}{2}\right),
\end{align}
where the finite difference $\delta_t^- f_0^n(0) := (f_0^n(0)-f_0^{n-1}(0))/\tau$ is the approximation of $\partial_t f_0^n(0)$.

Let $\psi_M^n(x)$  be the approximation of $\psi_M(x,t_n)$ and denote $g(\phi(x,t_n))$ for $\phi(x,t)$ as
\begin{equation}
    \label{eq:def_g_ch4}
    g(\phi(0)) = \frac{d}{dt} f(\phi(t)) |_{t=0}, \quad g(\phi(t_n)) = \delta^-_t f(\phi(t_n)), \ n\ge1,
\end{equation}
then the Fourier spectral method can be formulated as follows. Choose $\psi_M^0 = P_M\psi_0$, then we can update the approximation $\psi_M^{n+1} \in Y_M$ for $n \ge 0$ as
\begin{equation}
    \psi_M^{n+1}(x) = \sum_{l \in \mathcal{T}_M} \widehat{(\psi_M^{n+1})}_l e^{i \mu_l (x-a)}, \quad x\in \overline{\Omega},
\end{equation}
with
\begin{align}
\widehat{(\psi_M^{1})}_l = &\ c^0_l \widehat{(\psi^0_M)}_l + d^0_l \widehat{(\psi_1)}_l + p_l \widehat{(f(\psi^0_M))}_l + q_l \widehat{(g(\psi^0_M))}_l,     \label{eq:semi_discretization_scheme_ch4_1}\\
 \widehat{(\psi_M^{n+1})}_l = &\ c_l \widehat{(\psi_M^{n-1})}_l + d_l \widehat{(\psi_M^{n})}_l + p_l \widehat{(f(\psi_M^{n}))}_l + q_l \widehat{(g(\psi_M^{n}))}_l \nn  \\
        &\  - p_l^* \widehat{(f(\psi_M^{n-1}))}_l - q_l^* \widehat{(g(\psi_M^{n-1}))}_l, \quad n\ge 1,    \label{eq:semi_discretization_scheme_ch4_2}
\end{align}
where $g(\psi_M^n)$ ($n \geq 0$) defined in \eqref{eq:def_g_ch4} can be computed by
\begin{equation*}
    \begin{split}
    &g(\psi_M^0) = 2|\psi_M^0|^2 P_M(\psi_1) + (\psi_M^0)^2 \overline{P_M(\psi_1)}, \\&g(\psi_M^n) = \delta_t^- f(\psi_M^n) = \frac{f(\psi_M^n) - f(\psi_M^{n-1})}{\tau}, \quad n\ge 1,
    \end{split}
\end{equation*}
and the coefficients in \eqref{eq:semi_discretization_scheme_ch4_1}--\eqref{eq:semi_discretization_scheme_ch4_2} are given by
\begin{equation}
    \label{eq:coefficients_ch4}
    \begin{split}
       &c^0_l = \frac{\beta_l^+  e^{i \beta^-_l \tau} - \beta_l^-  e^{i \beta^+_l \tau}}{\beta_l}, \quad d^0_l =  -i\tau e^{\frac{i\tau}{2\alpha}} \text{sinc} (\tau \beta_l/2),\\
       &c_l = -e^{\frac{i\tau}{\alpha}}, \quad d_l = 2 e^{\frac{i\tau}{2\alpha}} \cos(\tau \beta_l/2),\\
       &p_l = \frac{i\varepsilon^2 \tau}{\alpha \beta_l} \left(\sigma_l^+(\tau)-\sigma_l^-(\tau)\right) \ (l\neq 0), \quad p_0 = \frac{i\varepsilon^2 \tau}{\alpha \beta_0} \left(\sigma_0^+(\tau)-1\right),\\
       & p_l^* = \frac{i\varepsilon^2 \tau e^{\frac{i\tau}{\alpha}}}{\alpha \beta_l} \left(\overline{\sigma_l^+(\tau)}-\overline{\sigma_l^-(\tau)}\right) \ (l\neq 0), \quad p_0^* = \frac{i\varepsilon^2 \tau e^{\frac{i\tau}{\alpha}}}{\alpha \beta_0} \left(\overline{\sigma_0^+(\tau)}-1\right),\\
       &q_l = \frac{-\varepsilon^2 \tau}{\alpha \beta_l} \left(\frac{1-\sigma_l^+(\tau) }{\beta_l^+ }-\frac{1-\sigma_l^-(\tau) }{\beta_l^-}\right) \ (l\neq 0), \\
       & q_0 = \frac{-\varepsilon^2 \tau}{\alpha \beta_0} \left(\frac{1-\sigma_0^+(\tau) }{\beta_0^+ } +\frac{i\tau}{2}\right), \\
       &q_l^* = \frac{-\varepsilon^2 \tau  e^{\frac{i\tau}{\alpha}}}{\alpha \beta_l} \left(\frac{1-\sigma_l^+(\tau) }{\beta_l^+ e^{i \beta_l^+ \tau}}-\frac{1-\sigma_l^-(\tau) }{\beta_l^-e^{i \beta_l^- \tau}}\right) \ (l\neq 0), \\
       & q_0^* = \frac{-\varepsilon^2 \tau  e^{\frac{i\tau}{\alpha}}}{\alpha \beta_0} \left(\frac{1-\sigma_0^+(\tau) }{\beta_0^+ e^{i \beta_0^+ \tau}} +\frac{i\tau}{2}\right). \\
    \end{split}
\end{equation}
It can be shown by direct computation and \eqref{eq:beta_l_ch4} that $|c_l|, |d_l|, |c_l^0| \lesssim 1$, $|d_l^0| \lesssim \tau^2$, $|p_l|, |p_l^*| \lesssim \varepsilon^2 \tau$ and $|q_l|, |q_l^*| \lesssim \varepsilon^2 \tau^2$ for $l \in \mathcal{T}_M$.

In practical simulations, the above scheme is not suitable due to the difficulty in computing the Fourier coefficients in \eqref{eq:semi_discretization_scheme_ch4_1}--\eqref{eq:semi_discretization_scheme_ch4_2}. As a result, we replace projections by interpolations to get the full discretized EWI-FP scheme. Let $\psi^n_j$ ($n \geq 1$) be the approximations of $\psi(x_j,t_n)$ and choose $\psi_j^0 = \psi_0(x_j)$ for $j \in \mathcal{T}^0_M$, then the numerical approximations $\psi^{n+1} = (\psi^{n+1}_0, \psi^{n+1}_1, \ldots, \psi^{n+1}_M)^T \in X_M$ at $t = t_{n+1}$ (n = 0, 1, \ldots) can be computed by 
\begin{equation}
    \psi_j^{n+1} = \sum_{l \in \mathcal{T}_M} \widetilde{(\psi^{n+1})}_l e^{i \mu_l (x_j-a)}, \quad j \in \mathcal{T}^0_M,
 \label{eq:ewi_fp}   
\end{equation}
with
\begin{align}
\widetilde{(\psi^{1})}_l =&\ c^0_l \widetilde{(\psi_0)}_l + d^0_l \widetilde{(\psi_1)}_l + p_l \widetilde{(f(\psi_0))}_l + q_l \widetilde{(g(\psi_0))}_l,  \quad l \in \mathcal{T}_M,    \label{eq:ewi_fp_scheme_ch4_1} \\
        \widetilde{(\psi^{n+1})}_l =&\ c_l \widetilde{(\psi^{n-1})}_l + d_l \widetilde{(\psi^{n})}_l + p_l \widetilde{(f(\psi^{n}))}_l + q_l \widetilde{(g(\psi^{n}))}_l \nn \\
&\ - p_l^* \widetilde{(f(\psi^{n-1}))}_l - q_l^* \widetilde{(g(\psi^{n-1}))}_l, \quad l \in \mathcal{T}_M, \quad n\ge 1,    \label{eq:ewi_fp_scheme_ch4_2}
\end{align}
where the coefficients $c^0_l$, $d^0_l$,  $c_l$, $d_l$, $p_l$, $p_l^*$, $q_l$ and $q_l^*$ are given in \eqref{eq:coefficients_ch4}, and $g(\psi^n)$ defined by \eqref{eq:def_g_ch4} can be computed by
\begin{equation}
    g(\psi_0) = 2|\psi_0|^2 \psi_1 + (\psi_0)^2 \overline{\psi_1}, \quad g(\psi^n) = \delta_t^- f(\psi^n), \quad n\ge 1.
\end{equation}

The EWI-FP is explicit and can be implemented efficiently thanks to the fast Fourier transform. For each time step, the computational cost is $O(M \log M)$ and the memory cost is $O(M)$.

\section{Uniform error bound for the long-time dynamics}
In this section, we rigorously establish the uniform error bound of the EWI-FP method for the NLSW \eqref{eq:NLSW_wl_1D_ch4} up to the time $t\in [0,T/\varepsilon^{\beta}]$ with $T>0$ fixed and $0 \leq \beta \leq 2$. 

\subsection{Main result}
We assume that for some integer $m \geq 2$, the exact solution $\psi(x,t)$ of the NLSW \eqref{eq:NLSW_wl_1D_ch4} up to the time $T_{\eps} = T/\eps^{\beta}$ satisfies
\begin{equation*}
({\rm A})  \quad   \|\psi\|_{L^{\infty}([0,T_\varepsilon]; W^{1, \infty} \cap H^{m})}+ \|\partial_t \psi\|_{L^{\infty}([0,T_\varepsilon];H^{1})} + \|\partial_{tt} \psi\|_{L^{\infty}([0,T_\varepsilon];H^{1})}  \lesssim 1,
\end{equation*}
and the initial data satisfies
\begin{equation*}
({\rm B}) \quad \|\psi_0(x)\|_{H^{m+2}} \lesssim 1.  
\end{equation*}
Denoting
\begin{align*}
    M_1=\sup_{\eps \in (0,1]}\sup_{t \in [0, T_{\eps}]} \|\psi(\cdot,t)\|_{L^\infty},
\end{align*}
we have the following error estimates for the EWI-FP method.
\begin{theorem}
    \label{thm:main_ch4}
    Let $\psi^n\in X_M$  be the numerical approximation of $ \psi(x,t_n)$ obtained from the EWI-FP \eqref{eq:ewi_fp}--\eqref{eq:ewi_fp_scheme_ch4_2} with fixed $0 < \alpha \le 1$. Under the assumptions {\rm (A)} and {\rm (B)},  there exist constants $ h_0 >0$ and $\tau_0 >0$ sufficiently small and independent of $\varepsilon$, when $0 < h < h_0$, $0 < \tau < \tau_0$, we have
    \begin{equation}
        \label{eq:error_main_ch4}
        \begin{split}
    &\left\|\psi(x, t_n) - I_M\psi^n\right\|_{H^1} \lesssim h^{m-1} + \eps^{2-\beta}\tau^2, \\
    &\left\|\psi^n\right\|_{l^{\infty}}  \le 1 + M_1, \quad 0\le n\le \frac{T/\eps^{\beta}}{\tau},
        \end{split}
    \end{equation}
    for any $0<\varepsilon\le 1$ and $0 \le \beta \le 2$.
\end{theorem}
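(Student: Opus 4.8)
The plan is to bound the global $H^1$ error by controlling a per-step local truncation error that carries the small factor $\eps^2$ from the weak nonlinearity, and then to propagate it with a discrete Gronwall argument whose growth rate is itself only $O(\eps^2)$. First I would split
\[
\psi(x,t_n)-I_M\psi^n = \big(\psi-P_M\psi\big)(x,t_n) + \big(P_M\psi(x,t_n)-\psi_M^n(x)\big) + \big(\psi_M^n(x) - I_M\psi^n(x)\big),
\]
where $\psi_M^n$ is the projection-based scheme \eqref{eq:semi_discretization_scheme_ch4_1}--\eqref{eq:semi_discretization_scheme_ch4_2}. Under assumption {\rm (A)} the projection error obeys $\|\psi-P_M\psi\|_{H^1}\lesssim h^{m-1}$ by standard trigonometric approximation theory, and the aliasing discrepancy between the projection scheme and the collocation scheme \eqref{eq:ewi_fp_scheme_ch4_1}--\eqref{eq:ewi_fp_scheme_ch4_2} is likewise $O(h^{m-1})$ once the discrete solution is known to be bounded. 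The core quantity is the middle term $e^n(x):=P_M\psi(x,t_n)-\psi_M^n(x)$.

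For $e^n$ I would write the exact Fourier coefficients through the Duhamel identity \eqref{eq:int_form_l_ch4} and its two-step form \eqref{eq:sol_n_ch4}, and subtract the scheme. Since both satisfy the same linear recursion with coefficients $c_l,d_l$, the error obeys that recursion driven by a local truncation error $\eta^n_l$, namely the difference between the exact integral $\int_0^\tau\kappa_l(\tau-w)f_l^n(w)\,dw$ and its linear-in-$w$ Taylor approximation. Because $|\kappa_l|\le 2$ and $1/\beta_l\le\alpha$ are bounded, the remainder is governed by the quadratic Taylor term, giving $\|\eta^n\|_{H^1}\lesssim \eps^2\tau^3$, where assumption {\rm (A)} supplies the bound on $\|\partial_{tt}f(\psi)\|_{H^1}$ through $\|\psi\|_{W^{1,\infty}}$, $\|\partial_t\psi\|_{H^1}$ and $\|\partial_{tt}\psi\|_{H^1}$. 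The decisive feature is the prefactor $\eps^2$, inherited directly from the $\eps^2$ multiplying the nonlinearity.

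Next I would establish stability. The roots of the characteristic equation $\lambda^2-d_l\lambda-c_l=0$ are $e^{i\beta_l^\pm\tau}$, which lie on the unit circle since $\beta_l^\pm$ are the real dispersion frequencies of the linear NLSW; rewriting the two-step recursion as a one-step map for the pair of consecutive errors gives a stable (non-expansive) propagation in $H^1$. Meanwhile the nonlinear terms enter through coefficients with $|p_l|,|p_l^*|\lesssim\eps^2\tau$ and $|q_l|,|q_l^*|\lesssim\eps^2\tau^2$, so on the ball of radius $1+M_1$, where $f$ and $g$ are Lipschitz, they contribute only $O(\eps^2\tau)\big(\|e^n\|_{H^1}+\|e^{n-1}\|_{H^1}\big)$ to $\|e^{n+1}\|_{H^1}$. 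A discrete Gronwall inequality then yields, for $n\tau\le T/\eps^\beta$,
\[
\|e^n\|_{H^1}\lesssim e^{C\eps^2 n\tau}\sum_{k\le n}\|\eta^k\|_{H^1}\lesssim e^{CT\eps^{2-\beta}}\cdot\frac{T}{\eps^\beta\tau}\cdot\eps^2\tau^3\lesssim \eps^{2-\beta}\tau^2,
\]
the amplification factor staying uniformly bounded precisely because $\beta\le 2$.

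The whole argument must be closed by induction on $n$, carrying simultaneously the error bound and the a priori bound $\|\psi^n\|_{l^\infty}\le 1+M_1$. The latter is what places the discrete solution in the region where the cubic $f$ and its difference quotient $g$ are Lipschitz, and it is recovered from the former through the one-dimensional inverse and Sobolev inequalities once $h<h_0$ and $\tau<\tau_0$ are taken small. I expect the principal obstacle to be exactly this coupling: keeping the $H^1$ error below the threshold that guarantees $\|\psi^n\|_{l^\infty}\le 1+M_1$ uniformly across all $O(1/(\eps^\beta\tau))$ steps, which forces one to track the $\eps$-dependence of every constant and, above all, to verify that the Gronwall rate is genuinely $O(\eps^2)$ and not $O(1)$ --- otherwise the long-time estimate would collapse to the useless $e^{CT/\eps^\beta}$ growth flagged in the introduction.
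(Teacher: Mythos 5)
Your high-level strategy (a per-step truncation error of size $O(\eps^2\tau^3)$ obtained by Taylor-expanding the Duhamel integral, accumulation over $O(1/(\eps^\beta\tau))$ steps, a Gronwall rate of $O(\eps^2\tau)$ per step so that the exponent stays bounded for $\beta\le 2$, and an induction carrying the $l^\infty$ bound) coincides with the paper's, and those ingredients are sound. The genuine gap is in your stability step. You assert that since the characteristic roots of $\lambda^2-d_l\lambda-c_l=0$ are $e^{i\beta_l^\pm\tau}$, of unit modulus, ``rewriting the two-step recursion as a one-step map for the pair of consecutive errors gives a stable (non-expansive) propagation.'' Unit-modulus roots do not imply power-boundedness of the companion matrix unless the roots stay uniformly separated: here $e^{i\beta_l^+\tau}$ and $e^{i\beta_l^-\tau}$ coalesce whenever $\beta_l\tau\in 2\pi\mathbb{Z}$, and nearly coalesce for nearby $(l,\tau)$, in which case the companion matrix is a (near-)Jordan block whose $n$-th power grows like $\min\left(n,\,1/|\sin(\beta_l\tau/2)|\right)$. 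The theorem imposes no non-resonance condition on $\tau$, and $n\sim T/(\eps^\beta\tau)$ is huge, so with your argument a generic forcing of size $\eps^2\tau^3$ per step could accumulate, in resonant modes, to $O(n^2\eps^2\tau^3)=O(\eps^{2-2\beta}\tau)$ --- first order in $\tau$ and with the wrong $\eps$-scaling. The Gronwall step as you set it up therefore does not close.

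This resonance obstruction is exactly what the paper's machinery is built to circumvent, and it is the content you are missing. Lemmas \ref{lmm:local_truncation_ch4} and \ref{lmm:stability_ch4} do not merely bound $\xi^n$ and $\eta^n$: they establish the structural decompositions $\widetilde{(\xi^n)}_l = e^{i\beta_l^+\tau}\widetilde{(\xi^{n,+})}_l - e^{i\beta_l^-\tau}\widetilde{(\xi^{n,-})}_l - e^{\frac{i\tau}{\alpha}}\bigl(\widetilde{(\xi^{n-1,+})}_l - \widetilde{(\xi^{n-1,-})}_l\bigr)$ (and likewise for $\eta^n$), i.e.\ the forcings are discrete differences of components riding along the two characteristic phases. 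Combined with the exact algebraic identity \eqref{eq:ab}, the induction then produces the representation \eqref{eq:error_decomposition_ch4}, in which every local error component is propagated by a pure phase $e^{i\beta_l^\pm(n-k)\tau}$; no resonant Green's function $(\lambda_+^{j}-\lambda_-^{j})/(\lambda_+-\lambda_-)$ ever appears, so there is no secular growth even at exact resonance, and Cauchy--Schwarz plus Bessel then gives the clean $n$-term (not $n^2$-term) accumulation. To repair your proof you would need either this $\pm$-decomposition or an equivalent device, such as a one-step formulation in the exact variables $(\widehat{\psi}_l,\partial_t\widehat{\psi}_l)$ for which the free propagator is genuinely diagonal --- but not the pair of consecutive errors. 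A secondary weakness: your three-term splitting through the semi-discrete scheme $\psi_M^n$ requires bounding $\psi_M^n-I_M\psi^n$ over all $O(1/(\eps^\beta\tau))$ steps, which is itself a long-time stability comparison of two evolving discretizations (subject to the same resonance issue), not a one-shot approximation fact; the paper avoids this by comparing $P_M\psi(\cdot,t_n)$ directly with $I_M\psi^n$ and absorbing the projection-versus-interpolation discrepancies of the nonlinear terms into the per-step truncation error via the terms $P^{n,\pm}$, each of size $O(\eps^2\tau h^{m-1})$.
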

\begin{remark}
The uniform error bound can be extended to the cases $p \geq 1$ as
\begin{equation}
        \begin{split}
    &\left\|\psi(x, t_n) - I_M\psi^n\right\|_{H^1} \lesssim h^{m-1} + \eps^{2p-\beta}\tau^2, \\
    &\left\|\psi^n\right\|_{l^{\infty}}  \le 1 + M_1, \quad 0\le n\le \frac{T/\eps^{\beta}}{\tau},
        \end{split}
\end{equation}	
for any $0<\varepsilon\le 1$ and $0 \le \beta \le 2p$.
\end{remark}

\begin{remark}
    In 2D/3D case, by the corresponding discrete Sobolev inequalities, Theorem \ref{thm:main_ch4} still holds under the condition $\tau \lesssim \eps^{\beta/2-1}C_d(h)$, where $C_d (h) = 1/|\ln h|$ in 2D and $C_d (h) = h^{1/2}$ in 3D, respectively.
\end{remark}
    
In order to establish the error bound \eqref{eq:error_main_ch4}, we define the error function
 \begin{equation}
 e^n(x) := \psi(x, t_n) - I_M\psi^n, \quad 0 \leq n \leq \frac{T/\eps^{\beta}}{\tau},
 \end{equation}    
then we find from the regularity assumption that \cite{KO,STL}
\begin{align}
\left\|e^n(x)\right\|_{H^1} & \leq \left\|\psi(x, t_n) - P_M\psi(x, t_n)\right\|_{H^1} + \left\|P_M\psi(x, t_n)- I_M\psi^n\right\|_{H^1} \nn\\
&\leq C_1 h^{m-1} + \left\|P_M\psi(x, t_n)- I_M\psi^n\right\|_{H^1},
\end{align}
where $C_1$ is a constant independent of $h, n, \tau$ and $\eps$. Thus, it suffices to study the error $e^n_M \in Y_M$ given as
\begin{equation}
e^n_M(x) = P_M\psi(x,t_n)-I_M\psi^n, \quad 0 \leq n \leq \frac{T/\eps^{\beta}}{\tau}.
\label{eq:def_eM}
\end{equation}

\subsection{Preliminary estimates}
In this subsection, we prepare some results for proving the main theorem. In the following statements, we write $\psi(t)$ for $\psi(x, t)$ in short when there is no confusion.

For $\phi = (\phi_0,\phi_1,\cdots,\phi_M) \in X_M$, define the $l^2$-norm and $l^{\infty}$-norm on $X_M$ as
\begin{equation*}
\left\|\phi\right\|_{l^2}^2 = h\sum_{j=0}^{M-1} |\phi_j|^2, \quad \left\|\phi\right\|_{l^{\infty}} = \max_{0\le j \le M-1} |\phi_j|,
\end{equation*}
then we have the following error estimate. The proof proceeds in the analogous lines as in \cite{BC2,KO,STL} and we omit the details here for brevity.

\begin{lemma}
\label{lmm:h1_ch4}
Let $\phi(x) \in H^1(\Omega)$, $\phi=(\phi_0,\phi_1,\cdots,\phi_M)^T$ with $\phi_j = \phi(x_j)$ ($j \in \mathcal{T}^0_M$), then we have
\begin{equation}
\left\|\delta_x^+ \phi\right\|_{l^2} \lesssim \left\|\nabla I_M \phi\right\|_{L^2} \lesssim \left\|\delta_x^+ \phi\right\|_{l^2},
\end{equation}
where $\delta_x^+ \phi_M = (\phi_1 - \phi_M)/h$.
\end{lemma}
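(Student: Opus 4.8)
The plan is to pass to the discrete Fourier side and reduce both norms to weighted sums of the interpolation coefficients $\widetilde{\phi}_l$, and then to compare the two Fourier symbols mode by mode. The bridge between the vector $\phi \in X_M$ and the function $I_M\phi \in Y_M$ is that the interpolant matches $\phi$ at the grid points, so that $\phi_j = I_M\phi(x_j) = \sum_{l\in\mathcal{T}_M}\widetilde{\phi}_l\, e^{i\mu_l j h}$; once this is in hand, the entire estimate is encoded in the $\widetilde{\phi}_l$.

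First I would handle the continuous quantity. Differentiating the interpolant gives $\nabla I_M\phi(x) = \sum_{l\in\mathcal{T}_M} i\mu_l \widetilde{\phi}_l e^{i\mu_l(x-a)}$, and the orthogonality of $\{e^{i\mu_l(x-a)}\}$ on $\Omega$ (continuous Parseval) yields
\[
\|\nabla I_M\phi\|_{L^2}^2 = (b-a)\sum_{l\in\mathcal{T}_M}|\mu_l|^2\,|\widetilde{\phi}_l|^2.
\]
Next, for the discrete side, applying $\delta_x^+$ termwise shows that the $l$-th discrete Fourier coefficient of $\delta_x^+\phi$ is $\tfrac{e^{i\mu_l h}-1}{h}\widetilde{\phi}_l$ (here the periodic convention $\delta_x^+\phi_M = (\phi_1-\phi_M)/h$ is exactly what makes the symbol uniform across all indices). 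Using the discrete Parseval identity $\tfrac{1}{M}\sum_{j=0}^{M-1}|v_j|^2 = \sum_{l\in\mathcal{T}_M}|\widetilde{v}_l|^2$ together with $hM = b-a$, I obtain
\[
\|\delta_x^+\phi\|_{l^2}^2 = (b-a)\sum_{l\in\mathcal{T}_M}\frac{|e^{i\mu_l h}-1|^2}{h^2}\,|\widetilde{\phi}_l|^2 = (b-a)\sum_{l\in\mathcal{T}_M}\frac{4\sin^2(\mu_l h/2)}{h^2}\,|\widetilde{\phi}_l|^2.
\]

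It then remains to compare the discrete symbol $\tfrac{4\sin^2(\mu_l h/2)}{h^2}$ with the continuous symbol $|\mu_l|^2 = \tfrac{4(\mu_l h/2)^2}{h^2}$. Writing $\theta_l = \mu_l h/2 = \pi l/M$, the restriction $l \in \mathcal{T}_M$ forces $|\theta_l| \le \pi/2$, and on this range Jordan's inequality $\tfrac{2}{\pi}|\theta| \le |\sin\theta| \le |\theta|$ gives $\tfrac{4}{\pi^2}\theta_l^2 \le \sin^2\theta_l \le \theta_l^2$. Multiplying by $|\widetilde{\phi}_l|^2$ and summing produces the two-sided bound
\[
\frac{4}{\pi^2}\|\nabla I_M\phi\|_{L^2}^2 \le \|\delta_x^+\phi\|_{l^2}^2 \le \|\nabla I_M\phi\|_{L^2}^2,
\]
which is precisely the claimed equivalence with explicit constants $\tfrac{2}{\pi}$ and $1$.

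The one genuinely delicate point is this final symbol comparison, and it hinges entirely on the frequency cut-off to $\mathcal{T}_M$. It is exactly the condition $|\mu_l h/2| \le \pi/2$ that keeps $|\sin\theta_l|$ bounded below by a fixed multiple of $|\theta_l|$ and thereby secures the lower bound; without restricting the frequencies the discrete symbol $\sin(\mu_l h/2)$ could vanish (or nearly so) while $|\mu_l|$ stays bounded away from zero, and the equivalence of the two norms would break down. Everything else—the two Parseval identities and the identity $|e^{i\mu_l h}-1|^2 = 4\sin^2(\mu_l h/2)$—is routine.
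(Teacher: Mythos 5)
Your proof is correct and is essentially the argument the paper intends: the paper omits the proof of this lemma, deferring to \cite{BC2,KO,STL}, where the same strategy is used—pass to the discrete Fourier coefficients $\widetilde{\phi}_l$ via the two Parseval identities and compare the discrete symbol $4\sin^2(\mu_l h/2)/h^2$ with the continuous symbol $|\mu_l|^2$ mode by mode. Your write-up supplies the details faithfully, including the one genuinely essential point that the frequency cutoff $l\in\mathcal{T}_M$ forces $|\mu_l h/2|\le \pi/2$ so that Jordan's inequality yields the two-sided bound with constants $2/\pi$ and $1$.
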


Define the local truncation error  $\xi^{n}(x) \in Y_M$ as
\begin{equation}
    \xi^{n}(x) = \sum_{l \in \mathcal{T}_M} \widetilde{(\xi^n)}_l e^{i\mu_l(x-a)},
\label{eq:le_ch4}
\end{equation}
where
\begin{align}
\label{eq:local_error_ch4_1}
\widetilde{(\xi^0)}_l =&\ \widehat{(\psi(\tau))}_l - c^0_l \widehat{(\psi_0)}_l - d^0_l \widehat{(\psi_1)}_l - p_l \widetilde{(f(\psi_0))}_l - q_l \widetilde{(g(\psi_0))}_l,\\
\widetilde{(\xi^n)}_l =&\ \widehat{(\psi(t_{n+1}))}_l -c_l \widehat{(\psi(t_{n-1}))}_l - d_l \widehat{(\psi(t_n))}_l - p_l \widetilde{(f(\psi(t_n)))}_l \nn\\
&\ - q_l \widetilde{(g(\psi(t_n)))}_l + p_l^* \widetilde{(f(\psi(t_{n-1})))}_l + q_l^* \widetilde{(g(\psi(t_{n-1})))}_l, \quad n\ge 1, \label{eq:local_error_ch4_2}
\end{align}
then we have the following results.
\begin{lemma}
\label{lmm:local_truncation_ch4}
Under the assumptions {\rm (A)} and {\rm (B)}, for $\xi^{n}(x)$  defined in \eqref{eq:le_ch4} with \eqref{eq:local_error_ch4_1}--\eqref{eq:local_error_ch4_2},  we have the decomposition for $l \in \mathcal{T}_M$,
\begin{align}
 \widetilde{(\xi^0)}_l & = e^{i\beta_l^+\tau}\widetilde{(\xi^{0,+})}_l - e^{i\beta_l^-\tau}\widetilde{(\xi^{0,-})}_l ,\\
\widetilde{(\xi^n)}_l &=  e^{i\beta_l^+\tau}\widetilde{(\xi^{n,+})}_l - e^{i\beta_l^-\tau}\widetilde{(\xi^{n,-})}_l - e^{\frac{i\tau}{\alpha}} (\widetilde{(\xi^{n-1,+})}_l - \widetilde{(\xi^{n-1,-})}_l),\quad n\ge 1,  \label{eq:local_error_decomposition_ch4}
\end{align}
and $\xi^{n,\pm}(x) = \sum_{l\in \mathcal{T}_M} \widetilde{(\xi^{n,\pm})}_l e^{i \mu_l(x-a)} \in Y_M$. Then, we have the error bound
    \begin{equation}
        \|\xi^{n,\pm}(x)\|_{H^1} \lesssim \varepsilon^2 \tau(\tau^2 + h^{m-1}).
    \end{equation}
In addition, the coefficients given in \eqref{eq:coefficients_ch4} can be split as
\begin{equation}
\begin{split}
& p_l = e^{i \beta_l^+ \tau} p_l^+ - e^{i \beta_l^- \tau} p_l^-, \quad p_l^* = e^{\frac{i\tau}{\alpha}}\left(p_l^+ -  p_l^-\right) \\
& q_l = e^{i \beta_l^+ \tau} q_l^+ - e^{i \beta_l^- \tau} q_l^-, \quad q_l^* = e^{\frac{i\tau}{\alpha}}\left(q_l^+ -  q_l^-\right), \\
        &p_l^+ = \frac{i\varepsilon^2 \tau}{\alpha \beta_l} \overline{\sigma_l^+(\tau)}, \quad p_l^- = \frac{i\varepsilon^2 \tau}{\alpha \beta_l} \overline{\sigma_l^-(\tau)} \quad (l \neq 0), \quad p_0^- =\frac{i\varepsilon^2 \tau^2}{\alpha \beta_0},\\
        &q_l^+ = \frac{-\varepsilon^2 \tau}{\alpha \beta_l} \cdot \frac{1-\sigma_l^+(\tau) }{\beta_l^+ e^{i \beta_l^+ \tau}}, \\
        &q_l^- = \frac{-\varepsilon^2 \tau}{\alpha \beta_l} \cdot \frac{1-\sigma_l^-(\tau) }{\beta_l^- e^{i \beta_l^- \tau}} \quad (l \neq 0), \quad q_0^- = \frac{-i\varepsilon^2 \tau^2}{2\alpha \beta_0},	
\end{split}
\label{eq:decomposition_coefficients_ch4}
\end{equation}
where $|p_l^{\pm}| \lesssim \varepsilon^2 \tau$, $|q_l^{\pm}| \lesssim \varepsilon^2 \tau^2$. 
\end{lemma}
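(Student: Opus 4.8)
The plan is to substitute the exact solution into the scheme, use the exact Duhamel representation so that the homogeneous part cancels, and then estimate only the nonlinear quadrature residual. Since the Fourier modes of the exact solution satisfy \eqref{eq:int_form_l_ch4} for every $l$, the identities \eqref{eq:sol_0_ch4} and \eqref{eq:sol_n_ch4} hold verbatim with $f_l^n(w)=\widehat{(f(\psi(t_n+w)))}_l$. Inserting them into \eqref{eq:local_error_ch4_1}--\eqref{eq:local_error_ch4_2} makes the terms carrying $c_l^0,d_l^0,c_l,d_l$ cancel, leaving $\widetilde{(\xi^n)}_l$ equal to the exact integral $\frac{i\eps^2}{\alpha\beta_l}\int_0^\tau \kappa_l(\tau-w)f_l^n(w)\,dw$ (with its $f^{n-1}$ companion) minus the quadratures assembled from $p_l,q_l,p_l^*,q_l^*$ acting on the interpolated nonlinearities. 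The coefficient splitting \eqref{eq:decomposition_coefficients_ch4} is then a direct computation: from $\tau\overline{\sigma_l^\pm(\tau)}=\int_0^\tau e^{-i\beta_l^\pm w}\,dw$ one checks that $p_l^\pm$ and $q_l^\pm$ are exactly the zeroth- and first-moment weights of the half-frequency kernels,
\[
p_l^\pm=\frac{i\eps^2}{\alpha\beta_l}\int_0^\tau e^{-i\beta_l^\pm w}\,dw,\qquad q_l^\pm=\frac{i\eps^2}{\alpha\beta_l}\int_0^\tau w\,e^{-i\beta_l^\pm w}\,dw,
\]
and the bounds $|p_l^\pm|\lesssim\eps^2\tau$, $|q_l^\pm|\lesssim\eps^2\tau^2$ follow from $|\sigma_l^\pm|\le1$, $\beta_l\ge1/\alpha\gtrsim1$ and, for $l\neq0$, the elementary estimate $|1-\sigma_l^\pm(\tau)|/|\beta_l^\pm|\lesssim\tau$ (splitting $|\beta_l^\pm\tau|\le1$ and $|\beta_l^\pm\tau|>1$), while $l=0$ uses the explicit $p_0^-,q_0^-$.

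To obtain the decomposition I would factor $\kappa_l(\tau-w)=e^{i\beta_l^+(\tau-w)}-e^{i\beta_l^-(\tau-w)}$ and $\kappa_l(-w)=e^{-i\beta_l^+w}-e^{-i\beta_l^-w}$, splitting each exact integral into a ``$+$'' and a ``$-$'' half-frequency piece, and then collect terms by the shared prefactors $e^{i\beta_l^+\tau}$, $e^{i\beta_l^-\tau}$ and $e^{i\tau/\alpha}=e^{i\beta_l^+\tau}e^{i\beta_l^-\tau}$; using $p_l^*=e^{i\tau/\alpha}(p_l^+-p_l^-)$ and $q_l^*=e^{i\tau/\alpha}(q_l^+-q_l^-)$ this reproduces \eqref{eq:local_error_decomposition_ch4} with $\widetilde{(\xi^{n,\pm})}_l=\frac{i\eps^2}{\alpha\beta_l}\int_0^\tau e^{-i\beta_l^\pm w}f_l^n(w)\,dw-p_l^\pm\widetilde{(f(\psi(t_n)))}_l-q_l^\pm\widetilde{(g(\psi(t_n)))}_l$. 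Inserting the moment identities collapses this into the single consistency integral
\[
\widetilde{(\xi^{n,\pm})}_l=\frac{i\eps^2}{\alpha\beta_l}\int_0^\tau e^{-i\beta_l^\pm w}\big[f_l^n(w)-\widetilde{(f(\psi(t_n)))}_l-w\,\widetilde{(g(\psi(t_n)))}_l\big]\,dw,
\]
whose bracket I would split into (i) the temporal residual $P_M[f(\psi(t_n+w))-f(\psi(t_n))-w\,\delta_t^-f(\psi(t_n))]$, (ii) the interpolation error $P_Mf(\psi(t_n))-I_Mf(\psi(t_n))$, and (iii) the $w$-weighted interpolation error of $g(\psi(t_n))$.

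For the $H^1$-estimate the three pieces are handled differently. Term (i) is only as smooth as $\partial_{tt}f(\psi)$, so I would use the uniform multiplier bound $\sqrt{1+\mu_l^2}/\beta_l\lesssim1$ (valid for fixed $0<\alpha\le1$, since $\beta_l^2=(1+4\alpha\mu_l^2)/\alpha^2$) to trade the $1/\beta_l$ prefactor for one spatial derivative; combined with Minkowski's integral inequality, a second-order Taylor expansion at $t_n$ and the $O(\tau)$ accuracy of $\delta_t^-$, this yields $\lesssim\eps^2\tau^3$, where $\|\partial_{tt}f(\psi)\|_{L^\infty_tL^2}\lesssim1$ follows from assumption (A) and the one-dimensional embedding $H^1\subset L^\infty$. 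For (ii) and (iii) I would instead use the uniform weights $|p_l^\pm|\lesssim\eps^2\tau$, $|q_l^\pm|\lesssim\eps^2\tau^2$ together with the interpolation estimate $\|P_Mu-I_Mu\|_{H^1}\lesssim h^{m-1}\|u\|_{H^m}$; since $f(\psi(t_n))\in H^m$ by the algebra property of $H^m$ under (A), term (ii) contributes $\eps^2\tau\,h^{m-1}$.

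The step I expect to be the main obstacle is term (iii). The surrogate $g^n=\delta_t^-f(\psi(t_n))$ is a backward difference of $H^m$ functions, so under (A) only the crude bound $\|g^n\|_{H^m}\lesssim\tau^{-1}$ is available (the continuous derivative $\partial_tf(\psi)$ is controlled only in $H^1$, as $\partial_t\psi\in H^1$). The resolution is that this loss is compensated exactly by the $O(\tau^2)$ size of $q_l^\pm$: term (iii) is $\lesssim\eps^2\tau^2\,h^{m-1}\|g^n\|_{H^m}\lesssim\eps^2\tau\,h^{m-1}$, so the second-order temporal accuracy survives. Collecting (i)--(iii) gives $\|\xi^{n,\pm}\|_{H^1}\lesssim\eps^2\tau(\tau^2+h^{m-1})$. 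The first step $n=0$ is analogous, except that $g(\psi(t_0))=2|\psi_0|^2\psi_1+\psi_0^2\overline{\psi_1}$ is the exact derivative (no $\delta_t^-$, hence no $\tau^{-1}$ factor, using the regularity of $\psi_0$ and $\psi_1$) and the $e^{i\tau/\alpha}(\widetilde{(\xi^{n-1,+})}_l-\widetilde{(\xi^{n-1,-})}_l)$ term is absent.
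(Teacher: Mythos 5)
Your proposal is correct and follows essentially the same route as the paper's proof: cancel the homogeneous part via the exact Duhamel formula, split each $\xi^{n,\pm}$ into a temporal quadrature residual plus $p_l^\pm,q_l^\pm$-weighted projection-versus-interpolation errors, and bound these using $\|\partial_{tt}f(\psi)\|\lesssim 1$ and $\|P_Mu-I_Mu\|_{H^1}\lesssim h^{m-1}\|u\|_{H^m}$, respectively. Two of your refinements are worth keeping: the moment identities $p_l^\pm=\frac{i\eps^2}{\alpha\beta_l}\int_0^\tau e^{-i\beta_l^\pm w}\,dw$ and $q_l^\pm=\frac{i\eps^2}{\alpha\beta_l}\int_0^\tau w\,e^{-i\beta_l^\pm w}\,dw$ make the coefficient splitting transparent (and show that the stated $p_0^-$ and $q_0^-$ in the lemma carry a power and a sign typo, the values consistent with \eqref{eq:decomposition_coefficients_ch4} being $\frac{i\eps^2\tau}{\alpha\beta_0}$ and $\frac{i\eps^2\tau^2}{2\alpha\beta_0}$), and your explicit compensation of the $O(\tau^{-1})$ bound on $\|\delta_t^-f(\psi(t_n))\|_{H^m}$ by $|q_l^\pm|\lesssim\eps^2\tau^2$ fills in a step the paper's proof glosses over.
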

\begin{proof}
In \eqref{eq:semi_discretization_scheme_ch4_1}--\eqref{eq:semi_discretization_scheme_ch4_2}, we replace $\psi_M(x, t)$ by $\psi(x, t)$, the equations still hold for $l \in \mathcal{T}_M$. We use the same notation $f_l^n(s) = \widetilde{(f(\psi(t_n+s)))}_l$ without confusion. Substituting \eqref{eq:semi_discretization_scheme_ch4_1}--\eqref{eq:semi_discretization_scheme_ch4_2} into \eqref{eq:local_error_ch4_1}--\eqref{eq:local_error_ch4_2}, we know for $l\in \mathcal{T}_M$,
\begin{align}
\label{eq:local_error_2_ch4_1}
\widetilde{(\xi^0)}_l =&\ \frac{i\varepsilon^2}{\alpha \beta_l}\int_0^\tau \kappa_l(\tau-w) f_l^0(w) dw - p_l \widetilde{(f(\psi_0))}_l - q_l \widetilde{(g(\psi_0))}_l,\\
            \widetilde{(\xi^n)}_l =&\  \frac{i\varepsilon^2}{\alpha \beta_l}\int_0^\tau \kappa_l(\tau-w) f_l^n(w) dw - \frac{i\varepsilon^2}{\alpha \beta_l} e^{\frac{i\tau}{\alpha}} \int_0^\tau \kappa_l(-w) f_l^{n-1}(w) dw \nn \\ 
            &\ - p_l \widetilde{(f(\psi(t_n)))}_l- q_l \widetilde{(g(\psi(t_n)))}_l \nn\\
            &\ + p_l^* \widetilde{(f(\psi(t_{n-1})))}_l + q_l^* \widetilde{(g(\psi(t_{n-1})))}_l, \quad n\ge 1. \label{eq:local_error_2_ch4_2}
\end{align}
Denote the integral approximation errors $Q^{n,\pm}(x) = \sum_{l\in \mathcal{T}_M} \widetilde{(Q^{n,\pm})}_l e^{i \mu_l (x-a)} \in Y_M$ as  
\begin{equation}
\widetilde{(Q^{n,\pm})}_l = \frac{i\varepsilon^2}{\alpha \beta_l}\int_0^\tau e^{-i \beta_l^\pm w} f_l^n(w)dw  \mp (p_l^\pm \widehat{(f(\psi(t_n)))}_l +  q_l^\pm \widehat{(g(\psi(t_n)))}_l), 
 \label{eq:def_Q}      
\end{equation}
and the interpolation errors $P^{n,\pm}(x) = \sum_{l\in \mathcal{T}_M} \widetilde{(P^{n,\pm})}_l e^{i \mu_l (x-a)} \in Y_M$ as
\begin{align}
\label{eq:def_P1}
\widetilde{(P^{0,\pm})}_l =&\ p_l^\pm (\widehat{(f(\psi_0))}_l-\widetilde{(f(\psi_0))}_l)+q_l^\pm(\widehat{(2|\psi_0|^2 \psi_1)}_l \nn \\
            &\ + \widehat{((\psi_0)^2 \overline{\psi_1})}_l-\widetilde{(2|\psi_0|^2 \psi_1)}_l - \widetilde{((\psi_0)^2 \overline{\psi_1})}_l),  \\
\widetilde{(P^{n,\pm})}_l =&\ p_l^\pm (\widehat{(f(\psi(t_n)))}_l-\widetilde{(f(\psi(t_n)))}_l) \nn\\
& \ +q_l^\pm( \widehat{(\delta_t^- f(\psi(t_n)))}_l-\widetilde{(\delta_t^- f(\psi(t_n)))}_l). \label{eq:def_P2}
\end{align}
Combining \eqref{eq:decomposition_coefficients_ch4} with \eqref{eq:local_error_2_ch4_1}--\eqref{eq:local_error_2_ch4_2}, and defining
    \begin{equation}
        \xi^{n,\pm}(x) = Q^{n,\pm}(x) + P^{n,\pm}(x),
    \end{equation}
the decomposition \eqref{eq:local_error_decomposition_ch4} holds. Now, in order to estimate $\xi^{n,\pm}(x)$, we only need to estimate $Q^{n,\pm}(x)$ and $P^{n,\pm}(x)$, respectively.

We begin with the estimates of $Q^{n,\pm}(x)$. By the definition \eqref{eq:def_Q}, we have
\begin{align*}
\widetilde{(Q^{0,\pm})}_l = & \ \frac{i\varepsilon^2}{\alpha \beta_l}\int_0^\tau e^{-i \beta_l^\pm w} (f_l^0(w) - f_l^0(0) - w\partial_t f_l^0(0))dw \nn \\
            = & \ \frac{i\varepsilon^2}{\alpha \beta_l}\int_0^\tau \int_0^{w} \int_0^{w_1} e^{-i \beta_l^\pm w} (\partial_{tt} f_l^0(w_2)) dw_2 dw_1 dw, \\ 
\widetilde{(Q^{n,\pm})}_l =&\ \frac{i\varepsilon^2}{\alpha \beta_l}\int_0^\tau e^{-i \beta_l^\pm w} (f_l^n(w) - f_l^n(0) - w\delta_t^- f_l^n(0))dw \nn \\
            = &\ \frac{i\varepsilon^2}{\alpha \beta_l}\int_0^\tau \int_0^{w} \int_0^{w_1} e^{-i \beta_l^\pm w} (\partial_{tt} f_l^n(w_2)) dw_2 dw_1 dw \nn \\
            &\ -\frac{i\varepsilon^2}{\alpha \beta_l}\int_0^\tau e^{-i \beta_l^\pm w} w \left(\int_0^{\tau} \int_0^{w_1} \partial_{tt} f_l^n(-w_2) dw_2 dw_1\right) dw, \quad n \geq 1,
\end{align*}
which imply 
\begin{align}
|\widetilde{(Q^{0,\pm})}_l| \lesssim &\  \varepsilon^2 \int_0^\tau \int_0^{w} \int_0^{w_1} |\widehat{(\partial_{tt} f(\psi(w_2)))}_l| dw_2 dw_1 dw,\\
|\widetilde{(Q^{n,\pm})}_l| \lesssim &\ \varepsilon^2 \int_0^\tau \int_0^{w} \int_0^{w_1} |\widehat{(\partial_{tt} f(\psi(t_n+w_2)))}_l| dw_2 dw_1 dw \nn \\
            &\ +\varepsilon^2 \int_0^\tau \int_0^{\tau} \int_0^{w_1} |\widehat{(\partial_{tt} f(\psi(t_n-w_2)))}_l| dw_2 dw_1 dw, \quad n\geq 1.
\end{align}
Under the assumptions (A) and (B), $\|\psi(x,t)\|_{H^1}, \|\partial_t \psi(x,t)\|_{H^1}, \|\partial_{tt} \psi(x,t)\|_{H^1} \lesssim 1$ for $t\in [0,T_{\varepsilon}]$. Noticing $f(\phi) = |\phi|^2 \phi$ is smooth, direct computation shows that
    \begin{equation}
        \|\partial_{tt} f(\psi(t))\|_{H^1} \lesssim 1, \quad  t\in [0,T_{\varepsilon}].
    \end{equation}
By Cauchy inequality and Bessel inequality, we have for $n=0$,
\begin{align*}
\|Q^{0,\pm}(x)\|_{H^1}^2 &= (b-a) \sum_{l\in \mathcal{T}_M} (1+ |\mu_l|^2) |\widetilde{(Q^{0,\pm})}_l|^2 \\
&\lesssim \varepsilon^4 \sum_{l\in \mathcal{T}_M} (1+ |\mu_l|^2) (\int_0^\tau \int_0^{w} \int_0^{w_1} |\widehat{(\partial_{tt} f(\psi(w_2)))}_l| dw_2 dw_1 dw)^2\\
&\lesssim \varepsilon^4  \tau^3 \int_0^\tau \int_0^{w} \int_0^{w_1} \sum_{l\in \mathcal{T}_M} (1+ |\mu_l|^2) |\widehat{(\partial_{tt} f(\psi(w_2)))}_l|^2 dw_2 dw_1 dw\\
&\lesssim \varepsilon^4  \tau^3 \int_0^\tau \int_0^{w} \int_0^{w_1} \|\partial_{tt} f(\psi(w_2))\|_{H^1}^2 dw_2 dw_1 dw\\
&\lesssim \varepsilon^4  \tau^6.
\end{align*}
Similarly, for $n \ge 1$, it leads to
\begin{align*}
\|Q^{n,\pm}(x)\|_{H^1}^2 =&\ (b-a) \sum_{l\in \mathcal{T}_M} (1+ |\mu_l|^2) |\widetilde{(Q^{n,\pm})}_l|^2 \\
\lesssim &\ \varepsilon^4 \sum_{l\in \mathcal{T}_M} (1+ |\mu_l|^2) \biggl(\int_0^\tau \int_0^{w} \int_0^{w_1} \left|\widehat{(\partial_{tt} f(\psi(t_n + w_2)))}_l\right| dw_2 dw_1 dw\\
&\ +\int_0^\tau \int_0^{\tau} \int_0^{w_1} \left|\widehat{(\partial_{tt} f(\psi(t_n-w_2)))}_l\right| dw_2 dw_1 dw \biggr)^2\\
\lesssim &\  \eps^4  \tau^3 \biggl( \int_0^\tau \int_0^{w} \int_0^{w_1} \left\|\partial_{tt} f(\psi(t_n+w_2))\right\|_{H^1}^2 dw_2 dw_1 dw\\
&\ + \int_0^\tau \int_0^{\tau} \int_0^{w_1} \left\|\partial_{tt} f(\psi(t_n-w_2))\right\|_{H^1}^2 dw_2 dw_1 dw \biggr)\\
\lesssim & \  \eps^4  \tau^6.
\end{align*}
Thus, we obtain
\begin{equation}
\left\|Q^{n,\pm}(x)\right\|_{H^1} \lesssim \varepsilon^2  \tau^3, \quad 0 \leq n \leq \frac{T/\eps^{\beta}}{\tau}.
\label{eq:err_Q}
\end{equation}

Next,  we are going to estimate $P^{n,\pm}(x)$. By the definition \eqref{eq:def_P1}--\eqref{eq:def_P2}, we have
\begin{align*}
\left\|P^{0,\pm}(x)\right\|_{H^1} \le & \ p_l^\pm \left\|P_M (f(\psi_0)) - I_M (f(\psi_0))\right\|_{H^1} \\
&\ + q_l^\pm \left(\left\|P_M(2|\psi_0|^2 \psi_1) - I_M(2|\psi_0|^2 \psi_1)\right\|_{H^1}\right. \\
&\ + \left.\left\|P_M((\psi_0)^2 \overline{\psi_1})) - I_M((\psi_0)^2 \overline{\psi_1})\right\|_{H^1}\right),
\end{align*}
and for $n\ge 1$,
\begin{align*}
\left\|P^{n,\pm}(x)\right\|_{H^1} \le &\  p_l^\pm \left\|P_M (f(\psi(t_n)))- I_M (f(\psi(t_n)))\right\|_{H^1} \\
&\ + q_l^\pm \left\|P_M (\delta_t^- f(\psi(t_n)))- I_M (\delta_t^- f(\psi(t_n)))\right\|_{H^1}.\\
\end{align*}
Since $\psi(x,t), \psi_0(x), \psi_1(x)\in H^m$, it is obvious that $|\psi_0|^2 \psi_1$, $(\psi_0)^2 \overline{\psi_1}$, $f(\psi(t_n))$,  $\delta_t^-f(\psi(t_n))$ $\in  H^m$. Noticing $p_l^\pm \lesssim \varepsilon^2 \tau$, $q_l^\pm \lesssim \varepsilon^2 \tau^2$, we have
\begin{equation}
\left\|P^{n,\pm}(x)\right\|_{H^1} \lesssim \varepsilon^2 (\tau h^{m-1} + \tau^2  h^{m-1})\lesssim \varepsilon^2\tau h^{m-1}, \quad 0 \leq n \leq \frac{T/\eps^{\beta}}{\tau}.
\label{eq:err_P}
\end{equation}
Combining the error bound \eqref{eq:err_Q} for $Q^{n,\pm}(x)$ and the error bound \eqref{eq:err_P} for $P^{n,\pm}(x)$, we obtain
    \begin{equation}
        \|\xi^{n,\pm}(x)\|_{H^1} \lesssim \varepsilon^2 \tau(\tau^2 + h^{m-1}), \quad 0 \leq n \leq \frac{T/\eps^{\beta}}{\tau}.
    \end{equation}
\end{proof}

By \eqref{eq:def_eM} and \eqref{eq:le_ch4}, we have 
\begin{align}
\label{eq:em_decomposition_ch4}
\widetilde{(e_M^{1})}_l =&\ \widetilde{(\xi^0)}_l + c^0_l( \widehat{(\psi_0)}_l -\widetilde{(\psi_0)}_l) + d^0_l (\widehat{(\psi_1)}_l-\widetilde{(\psi_1)}_l),\\
\widetilde{(e_M^{n+1})}_l =&\ \widetilde{(\xi^n)}_l  + c_l (\widehat{(\psi(t_{n-1}))}_l - \widetilde{(\psi^{n-1})}_l) + d_l (\widehat{(\psi(t_n))}_l - \widetilde{(\psi^n)}_l) \nn \\
&\  + p_l \left(\widetilde{(f(\psi(t_n)))}_l - \widetilde{(f(\psi^n))}_l\right) + q_l \left(\widetilde{(g(\psi(t_n)))}_l - \widetilde{(g(\psi^n))}_l\right) \nn\\
&\ - p_l^*\left(\widetilde{(f(\psi(t_{n-1})))}_l - \widetilde{(f(\psi^{n-1}))}_l\right) - q_l^*  \left(\widetilde{(g(\psi(t_{n-1})))}_l - \widetilde{(g(\psi^{n-1}))}_l\right) \nn\\
=&\ \widetilde{(\xi^n)}_l + \widetilde{(\eta^n)}_l + c_l \widetilde{(e_M^{n})}_l + d_l \widetilde{(e_M^{n-1})}_l,  \quad n\ge 1,
\end{align}
where $\eta^n(x) = \sum_{l\in \mathcal{T}_M} \widetilde{(\eta^n)}_l e^{i \mu_l (x-a)} \in Y_M$ for $n \geq 1$ is defined by
\begin{align}
\label{eq:eta_ch4}
\widetilde{(\eta^n)}_l =&\ p_l (\widetilde{(f(\psi(t_n)))}_l - \widetilde{(f(\psi^n))}_l) + q_l (\widetilde{(g(\psi(t_n)))}_l - \widetilde{(g(\psi^n))}_l) \nn \\
        &\ - p_l^* (\widetilde{(f(\psi(t_{n-1})))}_l - \widetilde{(f(\psi^{n-1}))}_l) - q_l^*  (\widetilde{(g(\psi(t_{n-1})))}_l - \widetilde{(g(\psi^{n-1}))}_l),
\end{align}
then we have the following decomposition and estimates for $\eta^{n}(x)$.

\begin{lemma}
\label{lmm:stability_ch4}
Under assumptions (A) and (B), for $l \in \mathcal{T}_M$, we have the following decomposition
    \begin{equation}
        \label{eq:eta_decomposition_ch4}
            \widetilde{(\eta^{n})}_l =e^{i\beta_l^+\tau}\widetilde{(\eta^{n,+})}_l - e^{i\beta_l^-\tau}\widetilde{(\eta^{n,-})}_l - e^{\frac{i\tau}{\alpha}} (\widetilde{(\eta^{n-1,+})}_l - \widetilde{(\eta^{n-1,-})}_l), \quad n\ge 1,
    \end{equation}
and $\eta^{n,\pm}(x) = \sum_{l\in \mathcal{T}_M} \widetilde{(\eta^{n,\pm})}_l e^{i \mu_l(x-a)} \in Y_M$. Assume $\|\psi^n\|_{l^{\infty}} \le M_1 +1$, we have the estimate
\begin{equation}
\label{eq:eta_lmm_ch4}
\left\|\eta^{n,\pm}(x)\right\|_{H^1} \lesssim \varepsilon^2 \tau\left(\left\|e^{n-1}_M\right\|_{H^1} + \left\|e^n_M\right\|_{H^1} + h^{m-1}\right), \quad n \ge 1.
    \end{equation}
\end{lemma}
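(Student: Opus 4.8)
The plan is to split $\eta^n$ into $\pm$-parts by carrying the coefficient decomposition \eqref{eq:decomposition_coefficients_ch4} through the definition \eqref{eq:eta_ch4}, and then to reduce the $H^1$-bound to a discrete nonlinear Lipschitz estimate for $f(v)=|v|^2v$ and its difference quotient $g$. Abbreviating $a_l^k := \widetilde{(f(\psi(t_k)))}_l - \widetilde{(f(\psi^k))}_l$ and $b_l^k := \widetilde{(g(\psi(t_k)))}_l - \widetilde{(g(\psi^k))}_l$, the definition \eqref{eq:eta_ch4} reads $\widetilde{(\eta^n)}_l = p_l a_l^n + q_l b_l^n - p_l^* a_l^{n-1} - q_l^* b_l^{n-1}$. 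Substituting $p_l = e^{i\beta_l^+\tau}p_l^+ - e^{i\beta_l^-\tau}p_l^-$, $q_l = e^{i\beta_l^+\tau}q_l^+ - e^{i\beta_l^-\tau}q_l^-$, $p_l^* = e^{i\tau/\alpha}(p_l^+-p_l^-)$ and $q_l^* = e^{i\tau/\alpha}(q_l^+-q_l^-)$ from \eqref{eq:decomposition_coefficients_ch4} and regrouping the $e^{i\beta_l^+\tau}$- and $e^{i\beta_l^-\tau}$-terms, the choice $\widetilde{(\eta^{k,\pm})}_l := p_l^\pm a_l^k + q_l^\pm b_l^k$ reproduces \eqref{eq:eta_decomposition_ch4} exactly. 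This step is purely algebraic, and since the numerical and exact data agree on the grid one has $\eta^{0,\pm}=0$, making the $n=1$ case consistent.

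Expanding $\|\eta^{n,\pm}\|_{H^1}^2$ in the Fourier basis of $Y_M$ and using the uniform coefficient bounds $|p_l^\pm|\lesssim\varepsilon^2\tau$ and $|q_l^\pm|\lesssim\varepsilon^2\tau^2$ from Lemma \ref{lmm:local_truncation_ch4}, it suffices to control $\varepsilon^2\tau\,\|I_M f(\psi(t_n)) - I_M f(\psi^n)\|_{H^1}$ and $\varepsilon^2\tau^2\,\|I_M g(\psi(t_n)) - I_M g(\psi^n)\|_{H^1}$, where $I_M f(\psi^n)$ denotes the trigonometric interpolant of the grid values $f(\psi^n_j)$. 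Everything thus reduces to the claim $\|I_M f(\psi(t_n)) - I_M f(\psi^n)\|_{H^1} \lesssim \|e_M^{n}\|_{H^1} + h^{m-1}$.

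The main obstacle is precisely this discrete nonlinear Lipschitz estimate, made delicate by the fact that only $\|\psi^n\|_{l^\infty}\le M_1+1$ is assumed for the numerical solution. I would work with the band-limited interpolants $u := I_M\psi(t_n)$, $v := I_M\psi^n\in Y_M$ and $w := u-v$. The identity $w = (I_M-P_M)\psi(t_n) + e_M^n$ together with the interpolation estimate $\|(I_M-P_M)\psi(t_n)\|_{H^1}\lesssim h^{m-1}$ (valid as $\psi(t_n)\in H^m$) gives $\|w\|_{H^1}\lesssim \|e_M^n\|_{H^1}+h^{m-1}$, and in 1D band-limitedness yields $\|w\|_{L^\infty}\lesssim\|w\|_{H^1}$. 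On the grid, $f(u)-f(v) = (|u|^2+v\bar u)\,w + v^2\,\bar w$ with coefficients bounded in $l^\infty$ through the $L^\infty$-bounds $M_1,M_1+1$. Using Lemma \ref{lmm:h1_ch4} to pass from $\|\nabla I_M(\cdot)\|_{L^2}$ to $\|\delta_x^+(\cdot)\|_{l^2}$ and then the discrete Leibniz rule, the derivative either hits $w$, giving terms bounded by $\|\delta_x^+ w\|_{l^2}\lesssim\|w\|_{H^1}$ with bounded coefficients, or it hits a coefficient: the difference quotient of the smooth factor $\psi(t_n)$ is $l^\infty$-bounded via assumption (A), while that of the rough factor $\psi^n$ — controlled only by $\|\delta_x^+\psi^n\|_{l^2}\lesssim\|v\|_{H^1}\lesssim 1+\|w\|_{H^1}$ — always appears paired with the small factor $\|w\|_{L^\infty}\lesssim\|w\|_{H^1}$. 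This pairing is what keeps the bound linear, the quadratic remainder $\|w\|_{H^1}^2$ being absorbed by the a priori smallness of the error used in the induction; the $L^2$-part is the simpler $\|I_M(f(u)-f(v))\|_{L^2}\lesssim\|w\|_{l^2}\lesssim\|w\|_{H^1}$.

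For the $g$-contribution I would use $g(\psi^n)=\delta_t^- f(\psi^n)$ from \eqref{eq:def_g_ch4}, so that $g(\psi(t_n))-g(\psi^n) = \tau^{-1}[(f(\psi(t_n))-f(\psi^n)) - (f(\psi(t_{n-1}))-f(\psi^{n-1}))]$; applying the nonlinear estimate above at levels $t_n$ and $t_{n-1}$ gives $\|I_M g(\psi(t_n))-I_M g(\psi^n)\|_{H^1}\lesssim \tau^{-1}(\|e_M^n\|_{H^1}+\|e_M^{n-1}\|_{H^1}+h^{m-1})$. Multiplying the $f$-part by $\varepsilon^2\tau$ and the $g$-part by $\varepsilon^2\tau^2$ — the extra power of $\tau$ in $q_l^\pm$ exactly cancelling the $\tau^{-1}$ of the difference quotient — and summing yields \eqref{eq:eta_lmm_ch4}. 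The initial form $g(\psi^0)=2|\psi^0|^2 P_M\psi_1 + (\psi^0)^2\overline{P_M\psi_1}$ is treated by the same bilinear bound, and the level-$t_{n-1}$ estimate tacitly uses the $l^\infty$-control of $\psi^{n-1}$ as well.
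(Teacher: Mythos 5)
Your decomposition step and the overall reduction are exactly the paper's: you choose $\widetilde{(\eta^{k,\pm})}_l = p_l^{\pm}\,a_l^k + q_l^{\pm}\,b_l^k$, verify \eqref{eq:eta_decomposition_ch4} algebraically, note $\eta^{0,\pm}=0$, reduce via Parseval and the bounds $|p_l^{\pm}|\lesssim \eps^2\tau$, $|q_l^{\pm}|\lesssim\eps^2\tau^2$ to the single claim $\|I_M f(\psi(t_n)) - I_M f(\psi^n)\|_{H^1}\lesssim \|e_M^n\|_{H^1}+h^{m-1}$, and handle the $g$-part by the $\tau^{-1}$/$\tau^2$ cancellation. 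All of that matches. The gap is in how you prove the nonlinear estimate itself.

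Your splitting $f(u)-f(v)=(|u|^2+v\bar u)\,w+v^2\,\bar w$ forces the discrete derivative to fall on coefficients containing the numerical solution $v=I_M\psi^n$: after the discrete Leibniz rule you face terms of the form $(\delta_x^+ v_j)\,w_j$, which you bound by $\|\delta_x^+ v\|_{l^2}\,\|w\|_{L^\infty}\lesssim(1+\|w\|_{H^1})\|w\|_{H^1}$, producing the quadratic remainder $\|w\|_{H^1}^2$ that you propose to absorb "by the a priori smallness of the error used in the induction." But the lemma's only hypothesis on the numerical solution is $\|\psi^n\|_{l^\infty}\le M_1+1$; this gives no control of $\|\delta_x^+\psi^n\|_{l^2}$ uniformly in $h$ (grid values bounded by $M_1+1$ can oscillate between neighboring nodes so that $\|\delta_x^+\psi^n\|_{l^2}\sim h^{-1}$), hence $\|w\|_{H^1}$ is not a priori bounded and the quadratic term cannot be absorbed into the stated linear bound \eqref{eq:eta_lmm_ch4}. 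Invoking the induction imports a hypothesis the lemma does not carry, so what you prove is a strictly weaker statement than the one claimed. The paper's proof avoids the quadratic term by a different pairing: it represents $\delta_x^+\bigl[f(\psi(x_j,t_n))-f(\psi_j^n)\bigr]$ through the convex combinations $\phi_{1,j}(\theta),\phi_{2,j}(\theta)$ and splits each product difference as $(A_1-A_2)B_1+A_2(B_1-B_2)$, so that the rough quantity $\delta_x^+\psi^n_j$ never appears alone — it occurs only inside the difference $\delta_x^+\psi(x_j,t_n)-\delta_x^+\psi^n_j$ (which is $l^2$-controlled, linearly, by $\|e_M^n\|_{H^1}+h^{m-1}$ via Lemma \ref{lmm:h1_ch4}) multiplied by the $l^\infty$-bounded coefficient $|\phi_{2,j}(\theta)|^2\le(M_1+1)^2$, while the coefficient difference $|\phi_{1,j}|^2-|\phi_{2,j}|^2$ pairs with $\delta_x^+\psi(x_j,t_n)$, which is $l^\infty$-bounded by assumption (A). This yields a purely linear estimate under exactly the stated hypotheses. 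Your argument can be repaired by adopting this pairing (i.e., always writing $\delta_x^+ v=\delta_x^+\psi(\cdot,t_n)-\delta_x^+ w$ against a bounded coefficient rather than against $w$ itself), or else by restating the lemma with the additional hypothesis $\|e_M^n\|_{H^1}\lesssim 1$ and threading that through the induction in the main theorem.
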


\begin{proof}
\normalfont
Define $\eta^{n,\pm}(x)$ as
\begin{equation}
    \begin{split}
        \widetilde{(\eta^{n,\pm})}_l &= p_l^{\pm} (\widetilde{(f(\psi(t_n)))}_l - \widetilde{(f(\psi^n))}_l) + q_l^{\pm} (\widetilde{(g(\psi(t_n)))}_l - \widetilde{(g(\psi^n))}_l),
    \end{split}
\end{equation}
then it is easy to verify the decomposition \eqref{eq:eta_decomposition_ch4} holds. Noticing $\widetilde{(f(\psi(t_0)))}_l = \widetilde{(f(\psi^0))}_l$, $\widetilde{(g(\psi(t_0)))}_l = \widetilde{(g(\psi^0))}_l$, and the definition of $g$ for $n\ge 1$ \eqref{eq:def_g_ch4}, we have
\begin{equation}
    \begin{split}
        \widetilde{(\eta^{n,\pm})}_l = \begin{cases}
             &0, \quad n=0,\\
             &p_l^{\pm} (\widetilde{(f(\psi(t_n)))}_l - \widetilde{(f(\psi^n))}_l) \\
              &\quad + q_l^{\pm} (\widetilde{(\delta_t^- f(\psi(t_n)))}_l - \widetilde{(\delta_t^- f(\psi^n))}_l), \quad n \ge 1.
        \end{cases}
    \end{split}
\end{equation}
By Parseval equality and Cauchy inequality, noticing $|p_l^{\pm}| \lesssim \varepsilon^2 \tau$, $|q_l^{\pm}| \lesssim \varepsilon^2 \tau^2$, we have
\begin{align}
\label{eq:eta_estimate_1_ch4}
\left\|\eta^{n,\pm}(x)\right\|_{H_1}^2 =&\ (b-a) \sum_{l \in \mathcal{T}_M} (1 + |\mu_l|^2) |\widetilde{(\eta^{n,\pm})}_l|^2 \nn\\
\lesssim &\  \varepsilon^4 \tau^2 (b-a) \sum_{l \in \mathcal{T}_M} (1 + |\mu_l|^2) |\widetilde{(f(\psi(t_n)))}_l - \widetilde{(f(\psi^n))}_l|^2 \nn \\
& \ +\varepsilon^4 \tau^4 (b-a) \sum_{l \in \mathcal{T}_M} (1 + |\mu_l|^2) |\widetilde{(\delta_t^- f(\psi(t_n)))}_l - \widetilde{(\delta_t^- f(\psi^n))}_l|^2 \nn \\
\lesssim & \  \varepsilon^4 \tau^2 \left\|I_M (f(\psi(t_n))) - I_M (f(\psi^n))\right\|_{H^1}^2 \nn\\
&\ + \varepsilon^4 \tau^4 \left\|I_M (\delta_t^- f(\psi(t_n))) - I_M (\delta_t^- f(\psi^n))\right\|_{H^1}^2.
\end{align}

We are going to estimate the two terms in the RHS of \eqref{eq:eta_estimate_1_ch4}. According to our assumption $\|\psi^n\|_{l^{\infty}} \le M_1+1$, $f$ is locally Lipschitz on interval $[-(M_1+1), M_1+1]$ with $C_{M_1}$ being the Lipschitz coefficients. Thus, we have
\begin{align}
&\left\|I_M (f(\psi(t_n))) - I_M (f(\psi^n))\right\|_{L^2}^2 = h\sum_{j=0}^{M-1} |f(\psi(x_j,t_n)) - f(\psi_j^n)|^2 \nn \\
&\le C_{M_1} h \sum_{j=0}^{M-1} |\psi(x_j,t_n) - \psi_j^n|^2  = C_{M_1} \left\|I_M (\psi(t_n)) - I_M\psi^n\right\|_{L^2}^2 \nn \\
&\le 2C_{M_1} \left(\left\|I_M \psi(t_n) - P_M \psi(t_n)\right\|_{L^2}^2 + \left\|P_M \psi(t_n) - I_M \psi^n\right\|_{L^2}^2\right) \nn \\
&\lesssim h^{2m}+   \left\|e^n_M(x)\right\|_{L^2}^2,
\end{align}
To estimate $\left\|\nabla (I_M (f(\psi(t_n))) - I_M (f(\psi^n)))\right\|_{L^2}$, by Lemma \ref{lmm:h1_ch4}, it remains to estimate 
\begin{equation*}
\left\|\delta_x^+ (f(\psi(x_j,t_n)) - f(\psi^n_j))\right\|_{l^2}.
\end{equation*}
It can be written as
\begin{align*}
\delta_x^+ (f(\psi(x_j,t_n)) - f(\psi^n_j)) =& \int_0^1 |\phi_{1,j}(\theta)|^2 \delta_x^+ \psi(x_j,t_n) -|\phi_{2,j}(\theta)|^2 \delta_x^+ \psi^n_j d\theta \\
        &+\int_0^1  (\phi_{1,j}(\theta))^2 \delta_x^+ \overline{\psi(x_j,t_n)} - (\phi_{2,j}(\theta))^2 \delta_x^+ \overline{\psi^n_j} d\theta,
\end{align*}
where for $0 \le \theta \le 1$,
\begin{align*}
&\phi_{1, j}(\theta) = \theta \psi(x_{j+1}, t_n) + (1-\theta)\psi(x_j, t_n),\quad \phi_{2, j}(\theta) = \theta \psi^n_{j+1} + (1-\theta)\psi^n_{j}.
\end{align*}
With the assumption $\|\psi^n\|_{l^{\infty}} \le M_1+1$, it is obvious that $|\phi_{1,j}(\theta)|, |\phi_{2,j}(\theta)|\le M_1 +1$. In addition, the assumption (A) implies $\|\delta_x^+ \psi(x_j,t_n)\|_{l^{\infty}} \lesssim \|\partial_x \psi(x,t_n)\|_{L^{\infty}} \lesssim 1.$
Then, by the locally Lipschitz property of $|\cdot|^2$, for $ j \in \mathcal{T}^0_M$, we have
\begin{align*}
&\left|\int_0^1 |\phi_{1,j}(\theta)|^2 \delta_x^+ \psi(x_j, t_n) -|\phi_{2,j}(\theta)|^2 \delta_x^+ \psi^n_j d\theta\right| \\
&\le \left|\int_0^1 \left(|\phi_{1,j}(\theta)|^2 - |\phi_{2,j}(\theta)|^2\right) \delta_x^+ \psi(x_j, t_n) d \theta\right| + \left|\int_0^1 |\phi_{2,j}(\theta)|^2 \left(\delta_x^+ \psi(x_j, t_n) - \delta_x^+ \psi^n_j\right) d\theta\right|\\        
&\lesssim \left|\delta_x^+ \psi(x_j, t_n)\right|\int_0^1\left|\phi_{1,j}(\theta) - \phi_{2,j}(\theta)\right| d \theta + \left|\phi_{2,j}(\theta)|^2 \int_0^1 |\delta_x^+ \psi(x_j, t_n) - \delta_x^+ \psi^n_j\right| d \theta \\
&\lesssim \left|\psi(x_{j+1},t_n) - \psi^n_{j+1}\right| + \left|\psi(x_{j},t_n) - \psi^n_{j}\right| + \left|\delta_x^+ \psi(x_j, t_n) - \delta_x^+ \psi^n_j\right|.
\end{align*}
Similarly, for the second term, $(\cdot)^2$ is locally Lipschitz, so we have for $ j \in \mathcal{T}^0_M$
\begin{align*}
&\left|\int_0^1  (\phi_{1,j}(\theta))^2 \delta_x^+ \overline{\psi(x_j, t_n)} - (\phi_{2,j}(\theta))^2 \delta_x^+ \overline{\psi^n_j} d\theta\right| \\
&\lesssim \left|\psi(x_{j+1},t_n) - \psi^n_{j+1}\right| + \left|\psi(x_{j},t_n) - \psi^n_{j}\right| + \left|\delta_x^+ \psi(x_j, t_n) - \delta_x^+ \psi^n_j\right|.
\end{align*}
Combining these two estimates, noticing Lemma \ref{lmm:h1_ch4}, we obtain
\begin{align*}
&\left\|\delta_x^+ (f(\psi(x_j, t_n)) - f(\psi^n_j)\right\|_{l^2} \\
&\lesssim \left\|\psi(x_{j},t_n) - \psi^n_{j}\right\|_{l^2} + \left\|\delta_x^+ \psi(x_j, t_n) - \delta_x^+ \psi^n_j\right\|_{l^2} \\
&\lesssim  \left\|I_M\psi(x, t_n) - I_M \psi^n\right\|_{L^2} + \left\|\nabla I_M \psi(x, t_n)  - \nabla I_M \psi^n \right\|_{L^2}\\
&\lesssim  \left\|I_M \psi(x,t_n) - P_M \psi(x,t_n)\right\|_{L^2} + \left\|e_M^{n}\right\|_{L^2} + \left\|\nabla I_M \psi(x,t_n)  - \nabla P_M \psi(x,t_n)\right\|_{L^2} + \left\|\nabla e_M^{n}\right\|_{L^2} \\
&\lesssim \left \|e_M^{n}\right\|_{H^1} + h^{m-1}.
\end{align*}
Therefore, by Lemma \ref{lmm:h1_ch4}, for the $H^1$ semi-norm we have
\begin{align*}
\left\|\nabla (I_M (f(\psi(t_n))) - I_M (f(\psi^n)))\right\|_{L^2} &\lesssim \left\|\delta_x^+ (f(\psi(x_j, t_n)) - f(\psi^n_j))\right\|_{l^2} \\
&\lesssim \left\|e_M^{n}\right\|_{H^1} + h^{m-1}.
\end{align*}
Combining the estimates in $L^2$-norm, we obtain
\begin{equation*}
\left\|I_M (f(\psi(t_n))) - I_M (f(\psi^n))\right\|_{H^1} \lesssim \|e_M^{n}\|_{H^1} + h^{m-1}.
\end{equation*}
As a consequence, we have for $n \ge 1$,
\begin{align*}
&\left\|I_M (\delta_t^- f(\psi(t_n))) - I_M (\delta_t^- f(\psi^n))\right\|_{H^1} \\
&\le \tau^{-1} \left(\left\|I_M (f(\psi(t_n))) - I_M (f(\psi^n))\right\|_{H^1} + \left\|I_M (f(\psi(t_{n-1}))) - I_M (f(\psi^{n-1}))\right\|_{H^1}\right)\\
&\lesssim  \tau^{-1} \left(\left\|e_M^{n}\right\|_{H^1} + \left\|e_M^{n-1}\right\|_{H^1} + h^{m-1}\right).
\end{align*}
Substituting the above two estimates into \eqref{eq:eta_estimate_1_ch4}, we obtain
\begin{equation*}
\left\|\eta^{n,\pm}(x)\right\|_{H_1}^2 \lesssim \eps^4 \tau^2 \left(\left\|e_M^{n}\right\|_{H^1}^2 + \left\|e_M^{n-1}\right\|_{H^1}^2 + h^{2m-2}\right),
\end{equation*}
which completes the proof of the error bound \eqref{eq:eta_lmm_ch4}.
\end{proof}

\subsection{Proof of Theorem \ref{thm:main_ch4}}
First, we consider the cases $n=0$ and $n=1$. When $n=0$, $\left\|e^0_M(x)\right\|_{H^1} = \left\|P_M \psi_0 - I_M \psi_0\right\|_{H^1} \lesssim h^{m-1}$. When $n=1$, we have the decomposition
\begin{align*}
\widetilde{(e_M^{1})}_l =&\ \widetilde{(\xi^0)}_l + c^0_l\left( \widehat{(\psi_0)}_l -\widetilde{(\psi_0)}_l\right) + d^0_l \left(\widehat{(\psi_1)}_l-\widetilde{(\psi_1)}_l\right)\\
        =&\ e^{i \beta^+_l \tau} \widetilde{(e_M^{0,+})}_l + e^{i \beta^-_l \tau} \widetilde{(e_M^{0,-})}_l + e^{i \beta^+_l \tau} \widetilde{(\xi^{0,+})}_l \\
        & \ - e^{i \beta^-_l \tau} \widetilde{(\xi^{0,-})}_l + e^{i \beta^+_l \tau} \widetilde{(\eta^{0,+})}_l - e^{i \beta^-_l \tau} \widetilde{(\eta^{0,-})}_l,
\end{align*}
where $\widetilde{(\eta^{0,\pm})}_l = 0$ and $e_M^{0,\pm}(x) = \sum_{l\in \mathcal{T}_M} \widetilde{(e_M^{0,\pm})}_l e^{i \mu_l (x-a)} \in X_M$ is defined as 
\begin{align*}
\widetilde{(e_M^{0,+})}_l &= - \beta_l^{-1}\left(\beta_l^- \left( \widehat{(\psi_0)}_l -\widetilde{(\psi_0)}_l\right) + i \left(\widehat{(\psi_1)}_l-\widetilde{(\psi_1)}_l\right)\right),\\
\widetilde{(e_M^{0,-})}_l &=  \beta_l^{-1} \left(\beta_l^+ \left( \widehat{(\psi_0)}_l -\widetilde{(\psi_0)}_l\right) + i \left(\widehat{(\psi_1)}_l-\widetilde{(\psi_1)}_l\right)\right),
\end{align*}
so $\widetilde{(e_M^{0})}_l = \widetilde{(e_M^{0,+})}_l + \widetilde{(e_M^{0,-})}_l$. Since  $\beta_l^{-1} \beta_l^{\pm}, \beta_l^{-1} \lesssim 1$ for all $l \in \mathcal{T}_M$, by Bessel equality, we have
\begin{equation}
\left\|e_M^{0,\pm}(x)\right\|_{H^1} \lesssim \left\|P_M \psi_0 - I_M \psi_0\right\|_{H^1} + \left\|P_M \psi_1 - I_M \psi_1\right\|_{H^1}\lesssim h^{m-1} ,
\end{equation}
which implies
\begin{align*}
\left\|e_M^{1}(x)\right\|_{H^1} &\lesssim \left\|e_M^{0,+}(x)\right\|_{H^1} + \left\|e_M^{0,-}(x)\right\|_{H^1}+\left \|\xi^{0,+}(x)\right\|_{H^1}  + \left\|\xi^{0,-}(x)\right\|_{H^1} \\
        &\lesssim h^{m-1} + \eps^2 \tau^3.
\end{align*}
In 1D, by discrete Sobolev inequality, we have
\begin{equation}
\left\|e^n(x)\right\|_{L^{\infty}}^2 \lesssim \left\|e^n(x)\right\|_{L^2} \left\|\nabla e^n(x)\right\|_{L^2} \lesssim \left\|e^n(x)\right\|_{H^1}^2,
\end{equation}
and
\begin{equation}
\left\|\psi^n\right\|_{l^{\infty}} \leq \left\|\psi(x, t_n)\right\|_{L^{\infty}}+\left\|e^n(x)\right\|_{L^{\infty}}\leq M_1+\left\|e^n(x)\right\|_{L^{\infty}}.
\end{equation}
For $n=0,1$, we have proven $\left\|e^n(x)\right\|_{H^1} \lesssim h^{m-1} + \left\|e_M^n(x)\right\|_{H^1}\lesssim h^{m-1}+\eps^2\tau^3$, so there exist two constants $h_0,\tau_0>0$, when $0<h<h_0$ and  $0<\tau<\tau_0$, we have
\begin{equation}
\left\|\psi^n\right\|_{l^{\infty}}  \le M_1 + 1,
\end{equation}
which means the error bound \eqref{eq:error_main_ch4} holds for $n=0, 1$.

Next, we are going to adopt the mathematical induction to proceed the proof. Assuming the error bound \eqref{eq:error_main_ch4} holds for all $0 \le n \le q \le \frac{T/\eps^{\beta}}{\tau}-1$, and for all $0 \le n \le q$, the following error decomposition holds
\begin{align}
\widetilde{(e_M^n)}_l =&\ e^{i \beta^+_l n \tau} \widetilde{(e_M^{0,+})}_l + e^{i \beta^-_l n \tau} \widetilde{(e_M^{0,-})}_l  \nn \\
&\ + \sum_{k=0}^{n-1} \biggl( e^{i \beta^+_l (n-k) \tau} \widetilde{(\xi^{k,+})}_l - e^{i \beta^-_l(n-k) \tau} \widetilde{(\xi^{k,-})}_l \nn \\
&\ + e^{i \beta^+_l (n-k) \tau} \widetilde{(\eta^{k,+})}_l - e^{i \beta^-_l(n-k) \tau} \widetilde{(\eta^{k,-})}_l\biggr), 
\label{eq:error_decomposition_ch4}
\end{align}
we will prove \eqref{eq:error_main_ch4} and \eqref{eq:error_decomposition_ch4} holds for $n = q + 1$.

For any $a,b\in \mathbb{C}$ and $k\ge 0$, we have the equality
\begin{align}
&\left(e^{i\beta_l^+\tau} + e^{i\beta_l^-\tau}\right)\left(e^{i \beta^+_l k \tau} a + e^{i \beta^-_l k \tau} b\right) - e^{i(\beta_l^+ + \beta_l^-)\tau}\left(e^{i \beta^+_l (k-1) \tau} a + e^{i \beta^-_l(k-1) \tau} b\right) \nn  \\
        &= e^{i \beta^+_l (k+1) \tau} a  +e^{i \beta^-_l (k+1) \tau} b.
\label{eq:ab}
\end{align}
Then in \eqref{eq:em_decomposition_ch4}, by the definition of $c_l$ and $d_l$ in \eqref{eq:coefficients_ch4}, combining the decomposition of $\xi$ and $\eta$ and applying the equality \eqref{eq:ab}, we obtain
\begingroup
\allowdisplaybreaks
\begin{align*}
\widetilde{(e_M^{q+1})}_l =&\  \left(e^{i\beta_l^+\tau} + e^{i\beta_l^-\tau}\right) \widetilde{(e_M^q)}_l -e^{i(\beta_l^+ + \beta_l^-)\tau} \widetilde{(e_M^{q-1})}_l  \\
&\ +e^{i\beta_l^+\tau}\widetilde{(\xi^{q,+})}_l - e^{i\beta_l^-\tau}\widetilde{(\xi^{q,-})}_l - e^{i(\beta_l^+ + \beta_l^-)\tau}\left(\widetilde{(\xi^{q-1,+})}_l - \widetilde{(\xi^{q-1,-})}_l\right)\\
&\ + e^{i\beta_l^+\tau}\widetilde{(\eta^{q,+})}_l - e^{i\beta_l^-\tau}\widetilde{(\eta^{q,-})}_l -e^{i(\beta_l^+ + \beta_l^-)\tau}\left(\widetilde{(\eta^{q-1,+})}_l - \widetilde{(\eta^{q-1,-})}_l\right)\\
= &\ e^{i \beta^+_l (q+1) \tau} \widetilde{(e_M^{0,+})}_l + e^{i \beta^-_l (q+1) \tau} \widetilde{(e_M^{0,-})}_l \\
&\ +\sum_{k=0}^{q-2} \biggl( e^{i \beta^+_l (q+1-k) \tau} \widetilde{(\xi^{k,+})}_l - e^{i \beta^-_l(q+1-k) \tau} \widetilde{(\xi^{k,-})}_l \\
&\  + e^{i \beta^+_l (q+1-k) \tau} \widetilde{(\eta^{k,+})}_l - e^{i \beta^-_l(q+1-k) \tau} \widetilde{(\eta^{k,-})}_l\biggr)\\
&\ + \left(e^{i\beta_l^+\tau} + e^{i\beta_l^-\tau}\right)\left(e^{i \beta^+_l \tau} \widetilde{(\xi^{q-1,+})}_l - e^{i \beta^-_l \tau} \widetilde{(\xi^{q-1,-})}_l\right) \\&-e^{i(\beta_l^+ + \beta_l^-)\tau} (\widetilde{(\xi^{q-1,+})}_l - \widetilde{(\xi^{q-1,-})}_l)\\
&\ +(e^{i\beta_l^+\tau} + e^{i\beta_l^-\tau} )\left(e^{i \beta^+_l \tau} \widetilde{(\eta^{q-1,+})}_l - e^{i \beta^-_l \tau} \widetilde{(\eta^{q-1,-})}_l\right) \\
&\ -e^{i(\beta_l^+ + \beta_l^-)\tau}\left(\widetilde{(\eta^{q-1,+})}_l - \widetilde{(\eta^{q-1,-})}_l\right)\\
&\ +e^{i\beta_l^+\tau}\widetilde{(\xi^{q,+})}_l - e^{i\beta_l^-\tau}\widetilde{(\xi^{q,-})}_l + e^{i\beta_l^+\tau}\widetilde{(\eta^{q,+})}_l - e^{i\beta_l^-\tau}\widetilde{(\eta^{q,-})}_l, \\
\end{align*}
\endgroup
which implies
\begin{align*}
\widetilde{(e_M^{q+1})}_l= &\ e^{i \beta^+_l (q+1) \tau} \widetilde{(e_M^{0,+})}_l + e^{i \beta^-_l (q+1) \tau} \widetilde{(e_M^{0,-})}_l \\
&\ +\sum_{k=0}^{q} \biggl( e^{i \beta^+_l (q+1-k) \tau} \widetilde{(\xi^{k,+})}_l - e^{i \beta^-_l(q+1-k) \tau} \widetilde{(\xi^{k,-})}_l \\
&\ + e^{i \beta^+_l (q+1-k) \tau} \widetilde{(\eta^{k,+})}_l - e^{i \beta^-_l(q+1-k) \tau} \widetilde{(\eta^{k,-})}_l\biggr),\\
\end{align*}
i.e., the decomposition \eqref{eq:error_decomposition_ch4} still holds for $n = q + 1$. Then by Cauchy inequality, we have
\begin{align*}
\left|\widetilde{(e_M^{q+1})}_l\right|^2 & \le  6\biggl( \left|\widetilde{(e_M^{0,+})}_l\right|^2 +  \left|\widetilde{(e_M^{0,-})}_l\right|^2 \\
        &\quad  \ + (q+1) \sum_{k=0}^q \left(\left|\widetilde{(\xi^{k,+})}_l\right|^2 + \left|\widetilde{(\xi^{k,-})}_l\right|^2 + \left|\widetilde{(\eta^{k,+})}_l\right|^2 + \left|\widetilde{(\eta^{k,-})}_l\right|^2\right) \biggr).
\end{align*}
Combining Lemma \ref{lmm:local_truncation_ch4} and Lemma \ref{lmm:stability_ch4}, by Bessel equality, we have
\begin{align*}
\left\|e_M^{q+1}(x)\right\|_{H^1}^2 \le & \ 6\biggl(\left\|e_M^{0,+}(x)\right\|_{H^1}^2 +  \left\|e_M^{0,-}(x)\right\|_{H^1}^2+\left(q+1\right) \sum_{k=0}^q \Big(\left\|\xi^{k,+}(x)\right\|_{H^1}^2   \\
        &\  + \left\|\xi^{k,-}(x)\right\|_{H^1}^2+ \left\|\eta^{k,+}(x)\right\|_{H^1}^2 + \left\|\eta^{k,-}(x)\right\|_{H^1}^2 \Big)\biggr)\\
        \le & \  C_1 h^{2m-2} + C_2 \left(q+1\right)^2 \varepsilon^4 \tau^2 \left(\tau^4 + h^{2m-2}\right) \\
&\  + C_3 \left(q+1\right) \varepsilon^4 \tau^2 \sum_{k=0}^q \left(\left\|e^k_M\right\|_{H^1}^2  + h^{2m-2}\right)\\
        \le &\ C_0 \left(\varepsilon^{4-2\beta} \tau^4 + h^{2m-2}\right) + C_3 \varepsilon^{4-\beta} \tau \sum_{k=0}^q \left\|e^k_M\right\|_{H^1}^2.
\end{align*}
By the condition $0\le \beta \le 2$ and $0<\varepsilon \le 1$, we apply discrete Gronwall inequality to get
\begin{equation}
\left\|e_M^{q+1}(x)\right\|_{H^1}^2 \le C_T \left(\varepsilon^{4-2\beta} \tau^4 + h^{2m-2}\right), \quad 1 \le q \le \frac{T/\varepsilon^{\beta}}{\tau},
\end{equation}
where $C_T$ is a constant independent of $\tau,h,\varepsilon$ and $\beta$. Therefore, the first inequality in \eqref{eq:error_main_ch4} still holds for $n = q + 1$. By triangle inequality and discrete Sobolev inequality in 1D, there exist two constants $h_0>0$ and $\tau_0>0$ sufficiently small such that when $0<h<h_0$ and $0<\tau<\tau_0$, we have
\begin{equation}
\left\|\psi^{q+1}\right\|_{l^{\infty}} \le \left\|\psi(x, t_q)\right\|_{L^{\infty}} + \left\|e^{q+1}(x)\right\|_{L^{\infty}} \le 1+M_1.
\end{equation}
Therefore, by the method of mathematical induction, the proof of Theorem \ref{thm:main_ch4} is completed.

\section{Numerical results}
 In this section, we present the numerical results for the EWI-FP scheme \eqref{eq:ewi_fp} with \eqref{eq:ewi_fp_scheme_ch4_1}--\eqref{eq:ewi_fp_scheme_ch4_2} for the NLSW with weak nonlinearity \eqref{eq:NLSW_wl_1D_ch4}. In the following numerical experiments, we choose $\alpha =1$, the computational domain $\Omega = (-\pi,\pi)$, and the initial data as
\begin{equation}
\psi_0(x) = \frac{1}{2 + \cos^2(x) + \sin(x)},\quad \psi_1(x) = \frac{1}{2 + \sin^2(x) + \cos(x)}.
\end{equation}
The numerical simulations are presented on the time interval $[0, T/\eps^{\beta}]$ with $0 \le \beta \le 2$ and $T=1$ fixed. Here, we study the following three cases with different $\beta$: 

Case I. Fixed time dynamics up to the time at $O(1)$, i.e., $\beta =0$.

Case II. Intermediate long-time dynamics up to the time at  $O(\varepsilon^{-1})$, i.e., $\beta =1$.

Case III. Long-time dynamics up to the time at  $O(\varepsilon^{-2})$, i.e., $\beta =2$.

Since the exact solution of the NLSW is unknown, we use the proposed EWI-FP scheme with a very fine mesh $h_e = \pi/64$ and a very small time step $\tau_e = 5 \times 10^{-4}$ to get the `reference' solution numerically. In order to quantify the numerical errors, we measure the $H^1$-norm of $e(\cdot, t_n = 1/\eps^{\beta}) = \psi(x, t_n) - I_M\psi^n$. 

Spatial and temporal errors are displayed at $t = 1/\eps^{\beta}$ with different $\eps$ and $\beta$. For the test of spatial errors, we fix the time step size as $\tau =  5 \times 10^{-4}$ such that the temporal errors can be ignored. Table \ref{tabel:spatial_ch4} shows the long-time spatial errors for $\beta = 0$, $\beta = 1$ and $\beta = 2$, which indicates that the EWI-FP scheme is uniformly spectral accurate in space for any $0 <\varepsilon\le 1$ and $0 \le \beta \le 2$. 

\begin{table}[htb]
\caption{Spatial errors of the EWI-FP scheme for the NLSW \eqref{eq:NLSW_wl_1D_ch4} with different $\beta$ and $\varepsilon$.}
\renewcommand\arraystretch{1.1}
    \centering
    \resizebox{\textwidth}{!}{\begin{tabular}{c|ccccc}
        \hline
        & $\left\|e(\cdot, t = 1/\varepsilon^{\beta})\right\|_{H^1}$ & $h=\pi/4$ &$h=\pi/8$ &$h=\pi/16$ &$h=\pi/32$ \\
        \hline
        \multirow{5}{*}{$\beta=0$}&$\varepsilon = 1$ & 1.81E-1&	5.69E-3&	8.84E-5&	7.01E-10        \\
         &$\varepsilon = 1/2$& 1.28E-1&	6.57E-3&	5.95E-5&	6.43E-10      \\
         &$\varepsilon = 1/2^2$& 1.08E-1&	7.53E-3&	5.27E-5&	6.34E-10         \\
         &$\varepsilon = 1/2^3$& 1.04E-1&	7.74E-3&	5.10E-5&	6.29E-10         \\
         &$\varepsilon = 1/2^4$& 1.02E-1&	7.79E-3&	5.05E-5&	6.27E-10         \\
        \hline
        \multirow{5}{*}{$\beta=1$}&$\varepsilon = 1$ & 1.81E-1&	5.69E-3&	8.84E-5&	7.01E-10        \\
        &$\varepsilon = 1/2$& 1.28E-1&	7.39E-3&	3.28E-5&	1.22E-10        \\
        &$\varepsilon = 1/2^2$& 8.86E-2&	1.05E-2&	3.89E-5&	2.29E-10        \\
        &$\varepsilon = 1/2^3$& 3.94E-2&	1.20E-2&	6.05E-5&	5.17E-10        \\
        &$\varepsilon = 1/2^4$& 7.66E-2&	6.84E-3&	6.42E-6&	4.07E-10        \\
        \hline
        \multirow{5}{*}{$\beta=2$}&$\varepsilon = 1$ & 1.81E-1&	5.69E-3&	8.84E-5&	7.01E-10        \\
        &$\varepsilon = 1/2$& 8.70E-2&	1.22E-2&	5.04E-5&	2.50E-10        \\
        &$\varepsilon = 1/2^2$& 8.60E-2&	8.43E-3&	9.12E-6&	4.10E-10        \\
        &$\varepsilon = 1/2^3$& 1.04E-1&	4.64E-3&	3.43E-5&	5.74E-10        \\
        &$\varepsilon = 1/2^4$& 1.15E-1&	1.14E-2&	5.78E-5&	2.81E-10        \\
        \hline
    \end{tabular}}
\label{tabel:spatial_ch4}
\end{table}

For the temporal errors, a very fine mesh size $h = \pi/64$ is chosen such that the spatial errors can be neglected. Figures \ref{fig:beta0}--\ref{fig:beta2} depict the temporal errors of the EWI-FP scheme with different $\eps$ and $\tau$ for $\beta =0$, $\beta = 1$ and $\beta = 2$, respectively. From these figures and additional numerical results not shown here for brevity, we have the following observations: (i) In time, for any fixed $\eps = \eps_0 > 0$, the EWI-FP scheme is second-order accurate (cf. each line in Figures \ref{fig:beta0}(a)--\ref{fig:beta2}(a)). (ii) When $\beta = 0$, the temporal error behaves like $O(\eps^2\tau^2)$ (cf. Figure \ref{fig:beta0}(b)). Figure \ref{fig:beta1}(b) and Figure \ref{fig:beta2}(b) show that the temporal error is at $O(\eps\tau^2)$ for $\beta = 1$ and $O(\tau^2)$ for $\beta = 2$, respectively. (iii)  Our numerical results confirm the uniform error bounds given in the Theorem \ref{thm:main_ch4} and suggest that they are sharp.

\begin{figure}[ht!]
\begin{minipage}{0.49\textwidth}
\centerline{\includegraphics[width=7.5cm,height=6cm]{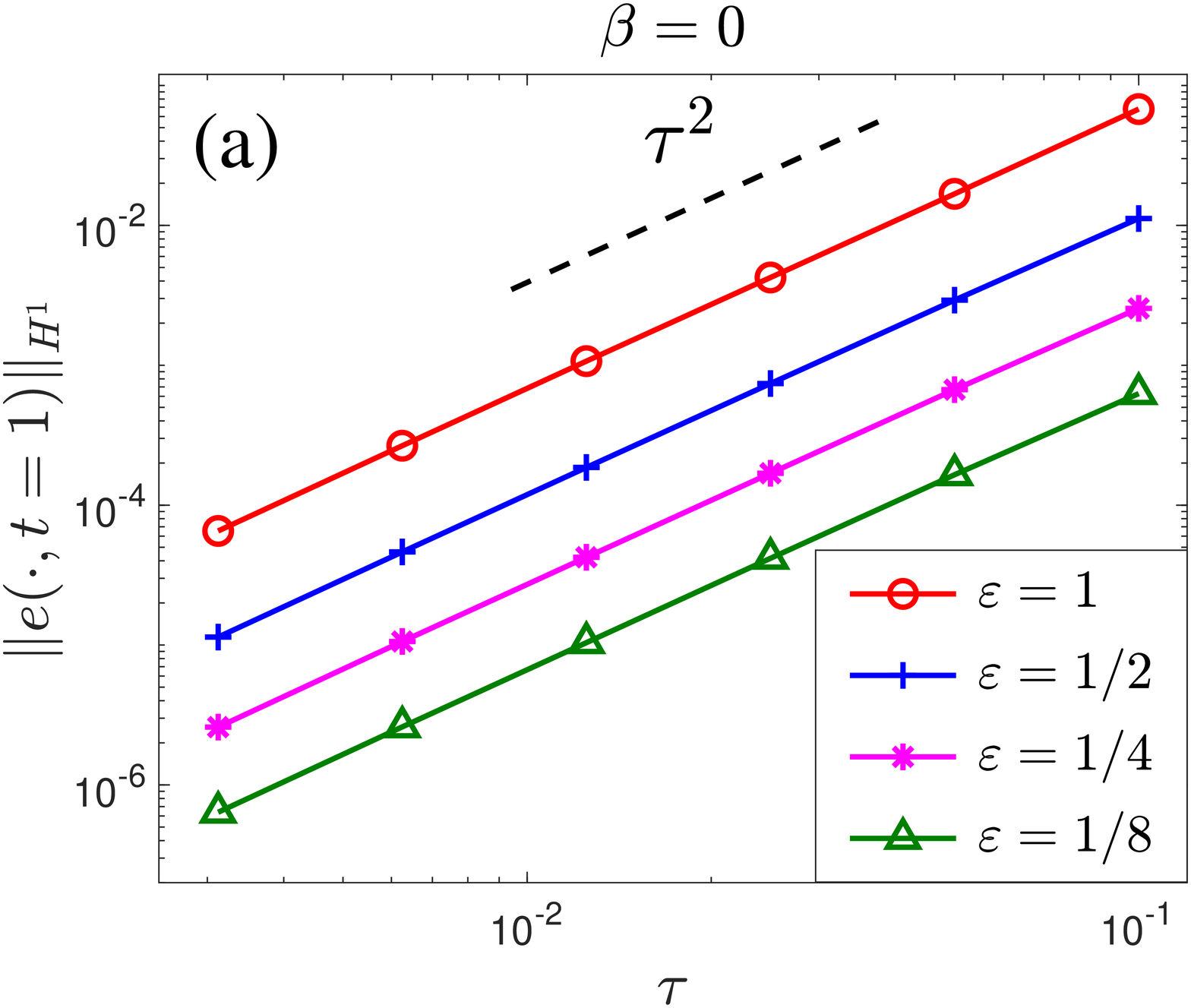}}
\end{minipage}
\begin{minipage}{0.49\textwidth}
\centerline{\includegraphics[width=7.5cm,height=6cm]{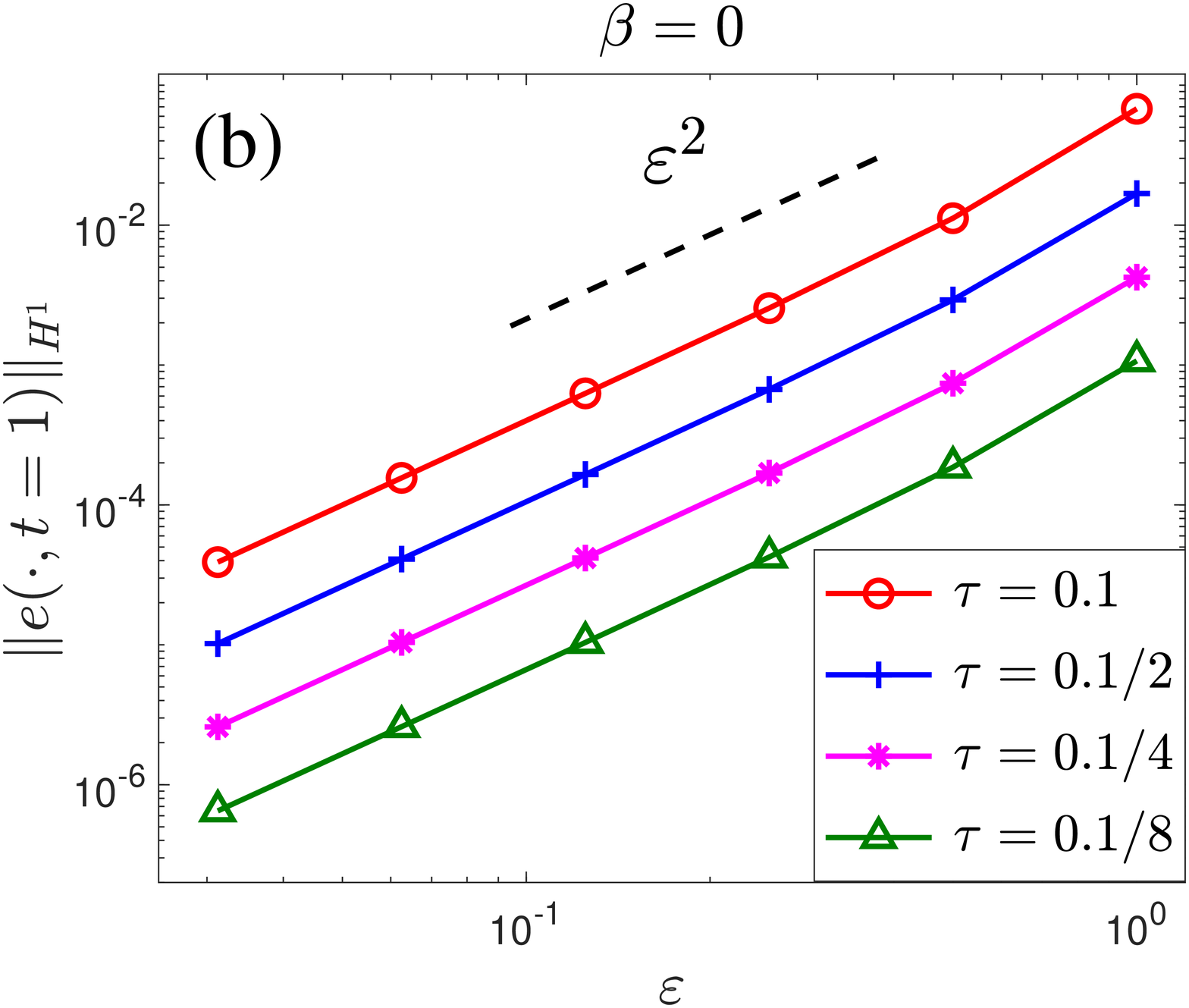}}
\end{minipage}
\caption{Temporal errors of the EWI-FP scheme for the NLSW \eqref{eq:NLSW_wl_1D_ch4} with $\beta = 0$  for (a) different $\eps$; and  (b) different $\tau$.}
\label{fig:beta0}
\end{figure}

\begin{figure}[ht!]
\begin{minipage}{0.49\textwidth}
\centerline{\includegraphics[width=7.5cm,height=6cm]{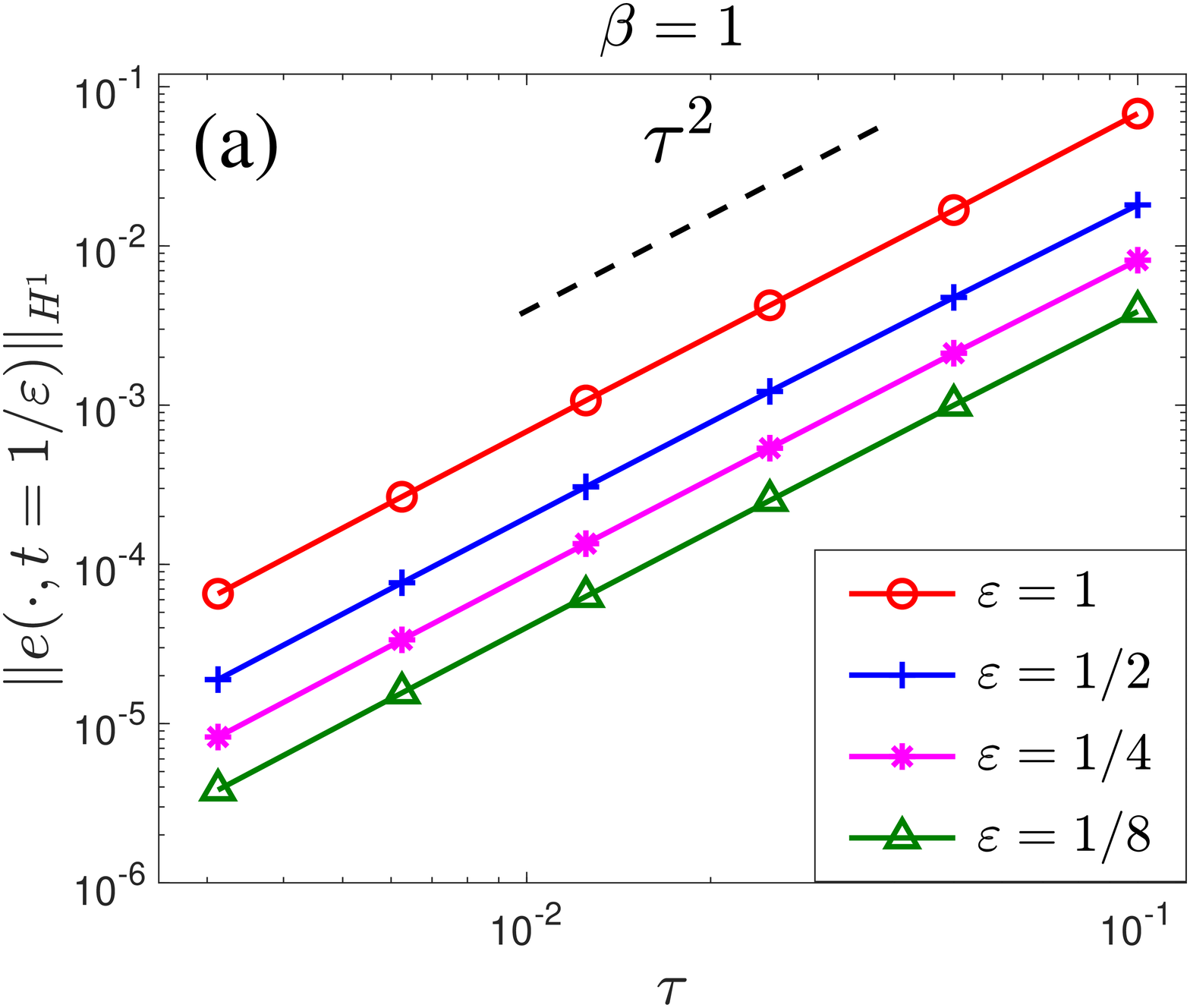}}
\end{minipage}
\begin{minipage}{0.49\textwidth}
\centerline{\includegraphics[width=7.5cm,height=6cm]{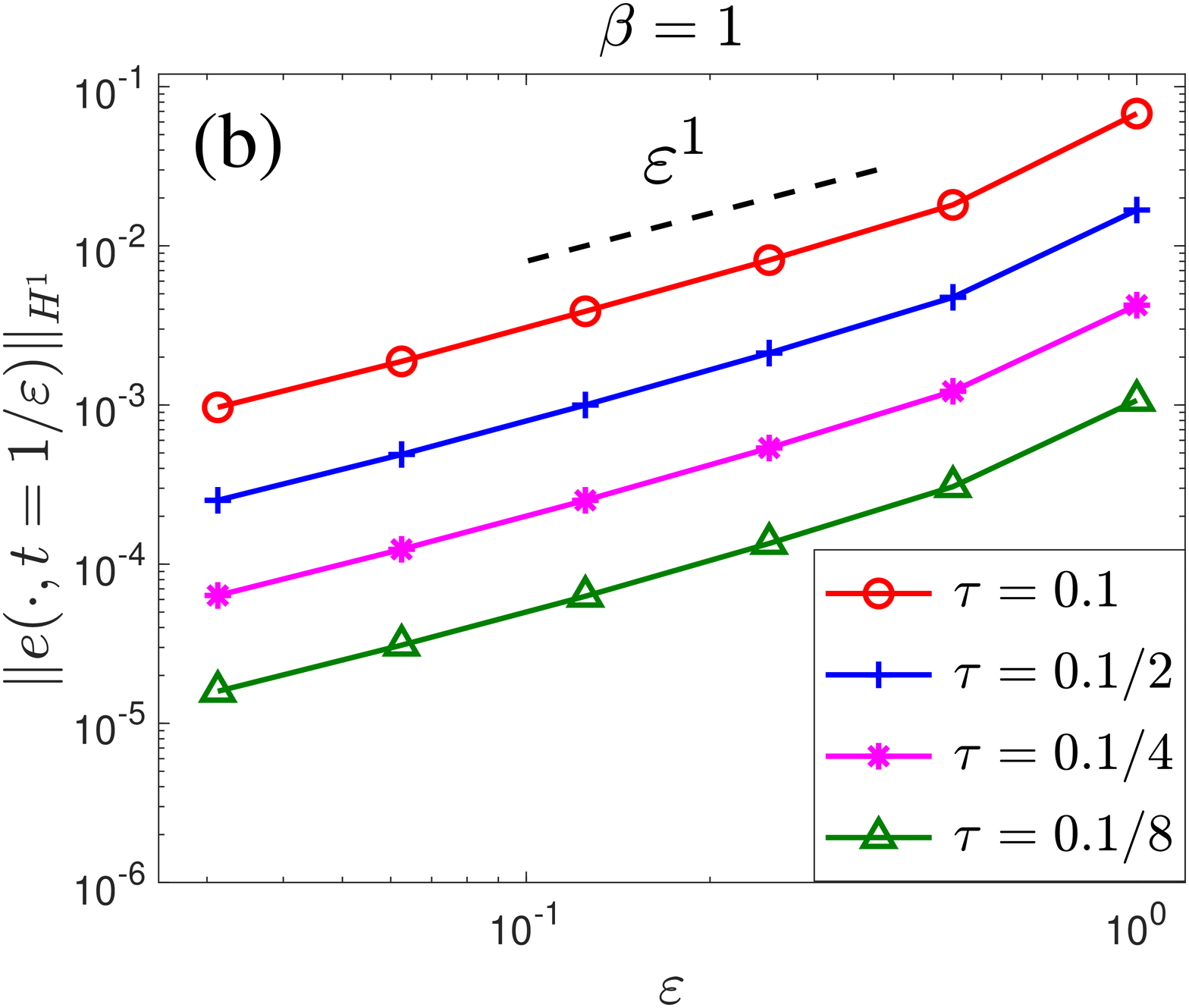}}
\end{minipage}
\caption{Temporal errors of the EWI-FP scheme for the NLSW \eqref{eq:NLSW_wl_1D_ch4} with $\beta = 1$  for (a) different $\eps$; and  (b) different $\tau$.}
\label{fig:beta1}
\end{figure}

\begin{figure}[ht!]
\begin{minipage}{0.49\textwidth}
\centerline{\includegraphics[width=7.5cm,height=6cm]{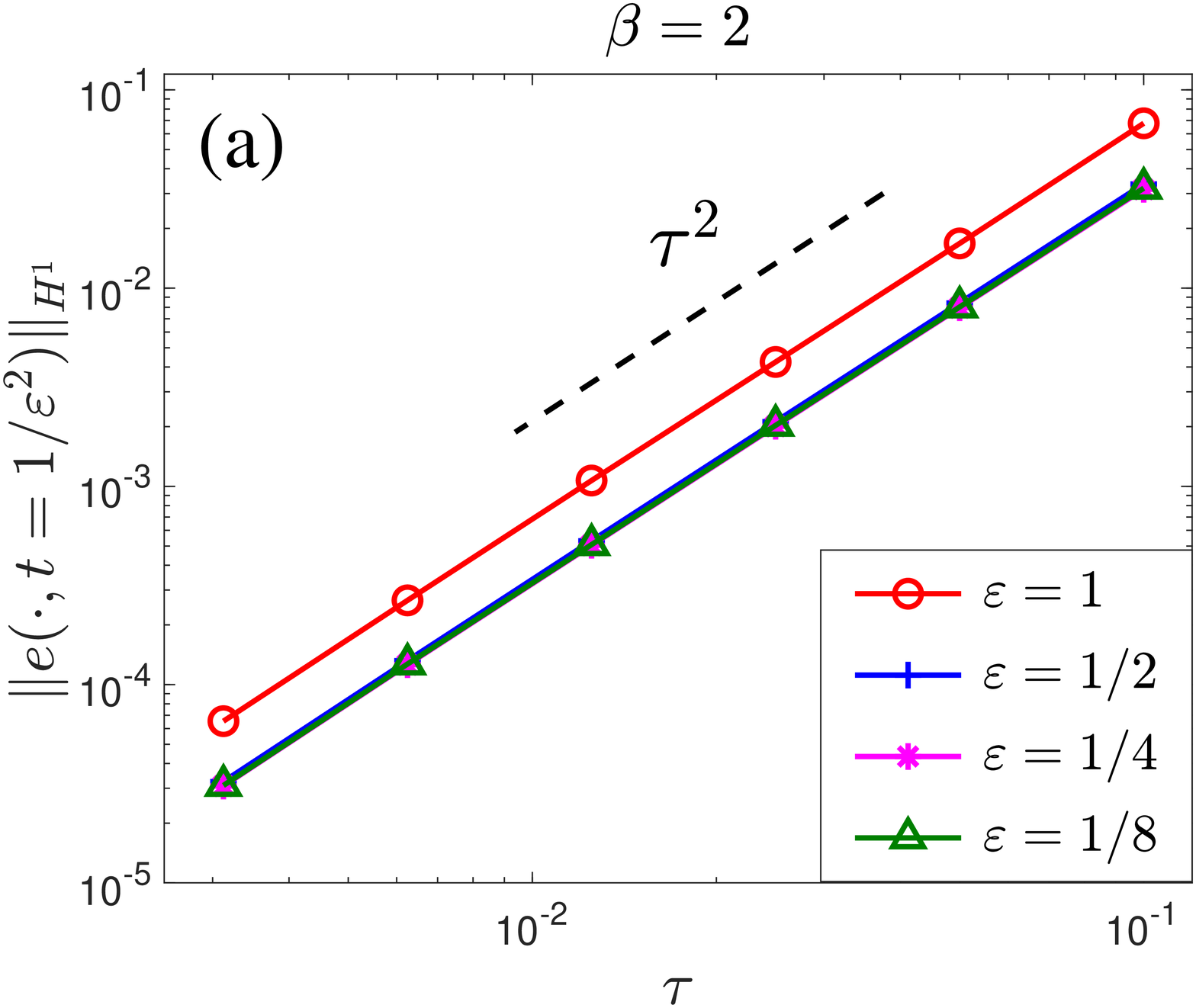}}
\end{minipage}
\begin{minipage}{0.49\textwidth}
\centerline{\includegraphics[width=7.5cm,height=6cm]{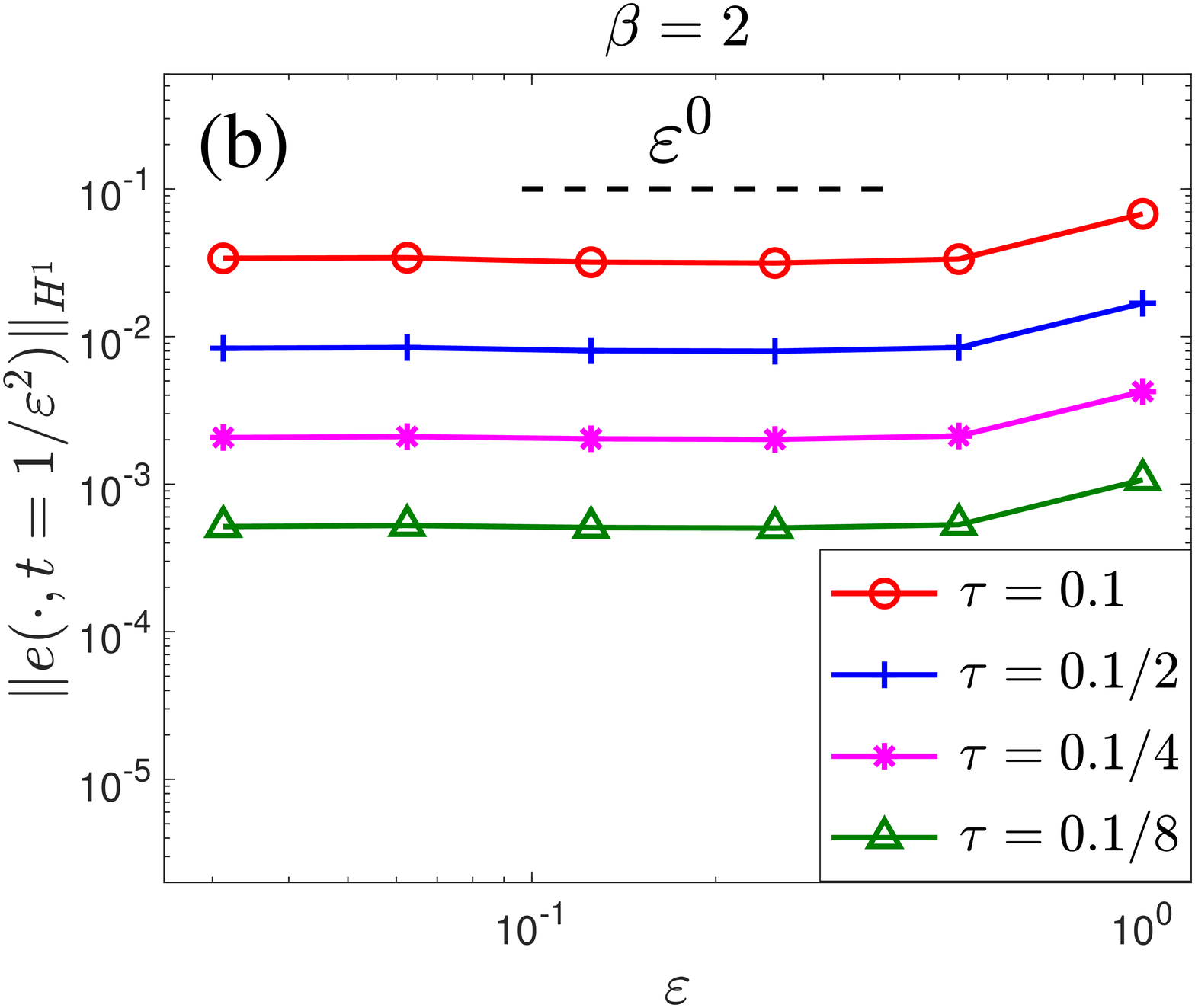}}
\end{minipage}
\caption{Temporal errors of the EWI-FP scheme for the NLSW \eqref{eq:NLSW_wl_1D_ch4} with $\beta = 2$  for (a) different $\eps$; and  (b) different $\tau$.}
\label{fig:beta2}
\end{figure}

\section{Conclusions}
The exponential wave integrator Fourier pseudospectral (EWI-FP) method was applied to discretize the nonlinear Schr\"odinger equation with wave operator (NLSW) with weak nonlinearity, where the strength of the nonlinearity is characterized by $\eps^{2p}$ with $0 < \eps \leq 1$ a dimensionless parameter and $ p \in \mathbb{N}^+$. Uniform error bound of the EWI-FP method was rigorously carried out at $O(h^{m-1}+ \eps^{2p-\beta}\tau^2)$
 for the long-time dynamics of the NLSW up to the time  $t = T/\eps^{\beta}$ with $T > 0$ fixed and $0 \leq \beta \leq 2p$. Finally, numerical results were presented to confirm the error bounds and demonstrate that they are optimal and sharp.

%%%% Acknowledgments %%%%%%%%
\section*{Acknowledgments}
The authors would like to thank Professor Weizhu Bao for his valuable suggestions and comments. YF gratefully acknowledges support from the Ministry of Education of Singapore grant MOE-000357-01 and the European Research Council (ERC) under the European Union's Horizon 2020 research and innovation programme (grant agreement No. 850941). YY was partially supported by the Natural Science Foundation of China (grant agreement No. 11971007). Part of the work was done when YF was visiting the Department of Mathematics and the Institute for Mathematical Sciences at the National University of Singapore in 2023.

%%%% Bibliography  %%%%%%%%%%

\end{document}